\documentclass[12pt,a4paper,twoside]{article}
\usepackage{graphics}
\usepackage{amssymb}
\usepackage{amsmath}
\usepackage{mathrsfs}
\usepackage{makeidx}
\usepackage{color}
\usepackage{graphicx}
\usepackage{subfigure}
\usepackage{multicol}
\usepackage{multirow}
\usepackage{float}
\usepackage{booktabs}
\usepackage{epsfig}
\usepackage{multirow}
\usepackage{epstopdf}
\usepackage{indentfirst}
\usepackage[colorlinks,
linkcolor=red,
anchorcolor=blue,
citecolor=blue
]{hyperref} 
\usepackage{caption}
\usepackage{algorithm,algorithmic}

\usepackage{latexsym, cite, bm, amsthm, amsmath}
\usepackage{enumerate}
\usepackage{marginnote}
\usepackage{epstopdf}

\def \[{\begin{equation}}
	\def \]{\end{equation}}

\hoffset=0truemm    \voffset=0truemm  \topmargin=0truemm
\oddsidemargin=0truemm    \evensidemargin=0truemm
\textheight=226truemm       \textwidth=158truemm
\newtheorem{thm}{Theorem}[section]
\newtheorem{cor}{Corollary}[section]
\newtheorem{lem}{Lemma}[section]

\newtheorem{alg}{Algorithm}[section]
\newtheorem{rem}{Remark}[section]

\newtheorem{exam}{Example}[section]
\numberwithin{equation}{section}

\title{\bf A generalization of the Newton-based matrix splitting iteration method for  generalized absolute value equations}

\usepackage[marginal]{footmisc}
\usepackage{authblk}
\author[a]{Xuehua Li\thanks{Email address: 3222714384@qq.com.}}
\author[a]{Cairong Chen\thanks{Corresponding author. Supported by the Natural Science Foundation of Fujian Province (Grand No. 2021J01661). Email address: cairongchen@fjnu.edu.cn.}}
\affil[a]{School of Mathematics and Statistics, FJKLMAA and Center for Applied Mathematics of Fujian Province, Fujian Normal University, Fuzhou, 350117, P.R. China}

\begin{document}
\date{\today}
\maketitle
\begin{quote}
{\bf Abstract:} A generalization of the Newton-based matrix splitting iteration method (GNMS) for solving the generalized absolute value equations (GAVEs) is proposed. Under mild conditions, the GNMS method converges to the unique solution of the GAVEs. Moreover, we can obtain a few weaker convergence conditions for some existing methods. Numerical results verify the effectiveness of the proposed method.\\
{\bf Keyword:} Generalized absolute value equations; Matrix splitting; Generalized Newton-based method; Convergence.
\end{quote}

\section{Introduction}\label{sec:intro}
Consider the system of generalized absolute value equations (GAVEs)
\begin{equation}\label{eq:gave}
Ax - B|x| - c = 0,
\end{equation}
where $A, B\in \mathbb{R}^{n\times n}$ and $c\in \mathbb{R}^n$ are given, and $x\in \mathbb{R}^n$ is unknown with $|x| = (|x_1|,~|x_2|,~\cdots,~|x_n|)^\top$. If the matrix $B$ is the identity matrix, GAVEs \eqref{eq:gave} turns into the system of absolute value equations (AVEs)
\begin{equation}\label{eq:ave}
	Ax - |x| - c = 0.
\end{equation}
To our knowledge, GAVEs~\eqref{eq:gave} was formally introduced by Rohn in $2004$ \cite{rohn2004}. Over the past two decades, GAVEs~\eqref{eq:gave} and AVEs~\eqref{eq:ave} have received considerable attention in the optimization community. The main reason is that GAVEs~\eqref{eq:gave} and AVEs~\eqref{eq:ave} are equivalent to the linear complementarity problem \cite{mang2007,mame2006,huhu2010,prok2009}, which has wide applications in engineering, science and economics \cite{copa1992}. As shown in \cite{mang2007}, solving GAVEs~\eqref{eq:gave} is NP-hard. In addition, if GAVEs~\eqref{eq:gave} is solvable, checking whether it has a unique solution or multiple solutions is NP-complete \cite{prok2009}. Nevertheless, some researches focused on constructing conditions under which the GAVEs~\eqref{eq:gave} has a unique solution for any $b\in \mathbb{R}^n$ \cite{rohf2014,mezz2020,wuli2020,wush2021,rohn2009,love2013}. Furthermore, the bounds for the solutions of GAVEs~\eqref{eq:gave} were studied in \cite{hlad2018}.

When GAVEs~\eqref{eq:gave} is solvable, considerable research effort has been, and is still, put into finding efficient algorithms for computing the approximate solution of it. For instance, by separating the differential and non-differential parts of GAVEs~\eqref{eq:gave}, Wang, Cao and Chen \cite{wacc2019} proposed the modified Newton-type (MN) iteration method, which is described in Algorithm~\ref{alg:mn}. Particularly, if $\Omega$ is the zero matrix, the MN iteration~\eqref{eq:mn} reduces to the Picard iteration method contained in \cite{rohf2014}. Whereafter, by using the matrix splitting technique, Zhou, Wu and Li \cite{zhwl2021} established the Newton-based matrix splitting (NMS) iteration method (see Algorithm~\ref{alg:nms}), which covers the MN method (by setting $\bar{M} = A$ and $\bar{N} = 0$), the shift splitting MN (SSMN) iteration method (see Algorithm~\ref{alg:ssmn}) \cite{liyi2021} (by setting $\bar{M}=\frac{1}{2}(A + \tilde{\Omega}),\bar{N}=\frac{1}{2}( \tilde{\Omega}-A)$ and $\Omega=0$) and the relaxed MN iteration method (see Algorithm~\ref{alg:rmn}) \cite{shzh2023} (by setting $\bar{M} = \theta A$ and $\bar{N}=(\theta - 1) A$).  More recently, Zhao and Shao proposed the relaxed NMS (RNMS) iteration method \cite{zhsh2023}, which is shown in Algorithm~\ref{alg:rnms}. On the one hand, as mentioned in \cite{zhsh2023}, if $\theta = 1$, $\hat{M} = \bar{M}$, $\hat{N} = \bar{N}$ and $\hat{\Omega} = \Omega$, the RNMS method is simplified to the NMS method. On the other hand, if $\bar{M} = \hat{\Omega} + \theta \hat{M}$, $\bar{N} = \hat{\Omega} + (\theta -1) \hat{M} + \hat{N}$ and $\Omega =0$, the NMS method is reduced to the RNMS method. In this sense, we can
conclude that the NMS method is equivalent to the RNMS method. For more numerical algorithms, one can refer to \cite{lild2022,soso2023,jizh2013,tazh2019,
cyhm2023,acha2018,alct2023,lilw2018} and the references therein.

\begin{alg}\label{alg}\label{alg:mn}
Assume that $\Omega\in \mathbb{R}^{n\times n}$ is a given matrix such that $A + \Omega$ is nonsingular.  Given an initial vector $x^0\in \mathbb{R}^{n}$, for $k=0,1,2,\cdots$ until the iteration sequence $\{x^k\}^\infty_{k=0} $ is convergent, compute
\begin{equation}\label{eq:mn}
		x^{k+1}=(A + \Omega)^{-1}(\Omega x^k+B|x^k| + c).
\end{equation}
\end{alg}

\begin{alg}\label{alg}\label{alg:nms}
Assume that $x^0\in \mathbb{R}^{n}$ is an arbitrary initial vector. Let $A = \bar{M}-\bar{N}$ and $\Omega\in \mathbb{R}^{n\times n}$ be a given matrix such that $\bar{M} + \Omega$ is nonsingular. For $k=0,1,2,\cdots$ until the iteration sequence $\{x^k\}^\infty_{k=0} $ is convergent, compute
\begin{equation}\label{eq:nms}
		x^{k+1}=(\bar{M} + \Omega)^{-1}\left[(\bar{N}+\Omega) x^k+B|x^k| + c\right].
\end{equation}
\end{alg}

\begin{alg}\label{alg}\label{alg:ssmn}
Assume that $x^0\in \mathbb{R}^{n}$ is an arbitrary initial vector. Let $\tilde{\Omega}\in \mathbb{R}^{n\times n}$ be a given matrix such that $A + \tilde{\Omega}$ is nonsingular. For $k=0,1,2,\cdots$ until the iteration sequence $\{x^k\}^\infty_{k=0} $ is convergent, compute
\begin{equation}\label{eq:ssmn}
		x^{k+1}=(A + \tilde{\Omega})^{-1}\left[(\tilde{\Omega}-A) x^k+2B|x^k| + 2c\right].
\end{equation}
\end{alg}

\begin{alg}\label{alg}\label{alg:rmn}
Assume that $x^0\in \mathbb{R}^{n}$ is an arbitrary initial vector. Choose $\Omega\in \mathbb{R}^{n\times n}$ and $\theta\ge 0$ such that $\theta A + \Omega$ is nonsingular. For $k=0,1,2,\cdots$ until the iteration sequence $\{x^k\}^\infty_{k=0} $ is convergent, compute
\begin{equation}\label{eq:rmn}
		x^{k+1}=(\theta A +  \Omega)^{-1}\left[\Omega x^k+(\theta -1) A x^k + B|x^k| + c\right].
\end{equation}
\end{alg}

\begin{alg}\label{alg}\label{alg:rnms}
Assume that $x^0\in \mathbb{R}^{n}$ is an arbitrary initial vector. Let $A = \hat{M}-\hat{N}$ and $\hat{\Omega}\in \mathbb{R}^{n\times n}$ be a given matrix such that $\theta \hat{M} + \hat{\Omega}$ $(\theta \ge 0)$ is nonsingular. For $k=0,1,2,\cdots$ until the iteration sequence $\{x^k\}^\infty_{k=0} $ is convergent, compute
\begin{equation}\label{eq:rnms}
		x^{k+1}=(\theta \hat{M} + \hat{\Omega})^{-1}\left[(\hat{\Omega} + (\theta -1) \hat{M} + \hat{N}) x^k+B|x^k| + c\right].
\end{equation}
\end{alg}

This paper is devoted to developing a generalization of the NMS (GNMS) iteration method. To this end, the matrix splitting technique, the relaxation technique and the variable  transformation $Qy = |x|$ (where $Q$ is a nonsingular matrix) are exploited. The convergence of the proposed method is studied and numerical examples are given to demonstrate the efficiency of the GNMS method.

The paper is organized as follows. In Section \ref{sec:prel}, we present notations and some useful lemmas. In Section \ref{sec:main}, we propose the GNMS iterative method for solving the GAVEs~\eqref{eq:gave} and a number of its special cases. In addition the convergence analysis of the proposed method will be discussed in detail. In Section \ref{sec:Numericalexample}, we give numerical results to show the effectiveness of our method. Finally, we give a conclusion for this paper in Section \ref{sec:Conclusions}.

\section{Preliminaries}\label{sec:prel}
We recall some basic definitions and results that will be used later in this paper.

\textbf{Notation.}
 In this paper, let $\mathbb{R}^{n\times n}$ be the set of all $n \times n$ real matrices and $\mathbb{R}^{n}= \mathbb{R}^{n\times 1}$. $|U|\in\mathbb{R}^{m\times n}$ denote the componentwise absolute value of matrix $U$. $I$ denotes the identity matrix with suitable  dimensions. $\Vert U\Vert$ denotes the $2$-norm of $U\in \mathbb{R}^{m\times n}$ which is defined by the formula $\Vert U\Vert= \max\{\Vert Ux\Vert: x\in\mathbb{R}^n,~\Vert x\Vert=1\}$, where $\Vert x\Vert$ is the $2$-norm of the vector $x$. For any matrix $U=(u_{ij})$ and $V=(v_{ij})\in\mathbb{R}^{n\times n}$, $U\leq V$ means that $u_{ij}\leq v_{ij}$ for any $i,~j=1,2,\cdots,n$. $\rho(U)$ denotes the spectral radius of $U$. A matrix $U$ is positive semi-definite if $\langle x,Ux\rangle\geq0$ for any nonzero vector $x\in\mathbb{R}^n$. For any matrix $U\in\mathbb{R}^{n\times n}$, $U = U_1-U_2$ is call a splitting of $U$ if $U_1$ is nonsingular.

\begin{lem}\label{lemma:1}
	\cite[Lemma 2.1]{young1971} If $s$ and $q$ are real, then both roots of the quadratic equation
	\begin{equation*}
		x^2 - sx + q = 0
	\end{equation*}
	are less than one in modulus if and only if
	\begin{equation*}
		|q| < 1\quad \text{and} \quad |s| < 1 + q.
	\end{equation*}
\end{lem}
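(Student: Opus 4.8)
The plan is to recognize this as a classical root-location (Schur--Cohn) condition and to transport it to a half-plane stability problem, where the Routh--Hurwitz criterion applies in its simplest form. The statement ``both roots of $x^2 - sx + q = 0$ lie strictly inside the unit disk'' is equivalent, under the Cayley/M\"obius map $x = \frac{1+w}{1-w}$, to ``both transformed roots have strictly negative real part.'' This map sends the open unit disk $\{|x|<1\}$ bijectively onto the open left half-plane $\{\operatorname{Re} w < 0\}$ (checking $x=0 \mapsto w=-1$ fixes the orientation), so I would first reduce the modulus condition on $x_1,x_2$ to a sign condition on the real parts of the images.

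Concretely, I would substitute $x = \frac{1+w}{1-w}$ into $p(x)=x^2 - sx + q$ and clear the denominator $(1-w)^2$, which yields the transformed quadratic
\[(1+s+q)\,w^2 + (2-2q)\,w + (1-s+q) = 0,\]
whose leading and constant coefficients are exactly $p(-1)=1+s+q$ and $p(1)=1-s+q$. For a real quadratic $aw^2+bw+c$ with $a\neq 0$, both roots have strictly negative real part if and only if $a,b,c$ share the same strict sign. I would then rule out the ``all negative'' branch directly: $1+s+q<0$ together with $1-s+q<0$ forces $q<-1$, whereas $2-2q<0$ forces $q>1$, a contradiction. Hence the criterion collapses to the three positivity conditions $1+s+q>0$, $2-2q>0$, and $1-s+q>0$.

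The remaining step is pure algebraic bookkeeping. The condition $2-2q>0$ is precisely $q<1$; the pair $1+s+q>0$ and $1-s+q>0$ is precisely $-(1+q)<s<1+q$, i.e. $|s|<1+q$; and since $|s|<1+q$ forces $1+q>|s|\ge 0$, hence $q>-1$, combining with $q<1$ gives $|q|<1$. Running the implications backward, $|q|<1$ and $|s|<1+q$ reproduce the three positivity conditions, so the equivalence holds in both directions, which is exactly the claim.

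I expect the only delicate point to be the behavior of the transformation on the boundary: the leading coefficient $p(-1)=1+s+q$ could vanish (equivalently $x=-1$ is a root, lying on the unit circle and therefore to be excluded), and likewise $p(1)=0$. These degenerate configurations correspond precisely to equality in $|s|=1+q$ and are excluded by the strict inequalities, so the bijection argument applies cleanly on the open disk without the degree dropping. As a transformation-free alternative I would instead split into the real-root case $s^2\ge 4q$, requiring the upward parabola $p$ to satisfy $p(1)>0$, $p(-1)>0$, discriminant $\ge 0$, and vertex $s/2\in(-1,1)$, and the complex-conjugate case $s^2<4q$, where the common modulus of the roots equals $\sqrt{q}$ so that strict containment is simply $q<1$; reconciling the two cases produces the same inequalities, but I find the Routh--Hurwitz route more transparent and less prone to sign errors.
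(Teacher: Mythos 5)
The paper does not prove this lemma at all: it is quoted verbatim from Young's book \cite[Lemma 2.1]{young1971} and used as a black box, so there is no in-paper argument to compare against. Your proof is correct and complete. The M\"obius substitution $x=\frac{1+w}{1-w}$ is computed correctly (the transformed coefficients are indeed $p(-1)$, $2-2q$, $p(1)$), the Routh--Hurwitz criterion for a real quadratic is stated and applied correctly, the ``all negative'' branch is properly excluded, and you handle the one genuine subtlety --- the possible vanishing of the leading coefficient $p(-1)$ --- by observing that it corresponds to a root on the unit circle, which is excluded in one direction by hypothesis and in the other by the strict inequality $|s|<1+q$. (For the forward direction it is worth noting explicitly that both roots inside the disk forces $p(\pm 1)>0$, not merely $p(\pm1)\neq 0$, since $p(\pm1)$ is either a product of two positive reals or a squared modulus; this is implicit in your bijection argument.) Young's original proof is essentially your ``transformation-free alternative'': a direct case split on the sign of the discriminant $s^2-4q$, using $|x_1x_2|=|q|$ and the parabola conditions $p(\pm1)>0$ in the real-root case and $|x_1|^2=q$ in the complex case. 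Both routes are standard; the conformal-map route generalizes more readily to higher-degree Schur--Cohn tests, while the direct route stays entirely within elementary algebra.
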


\begin{lem}\label{lemma:2}
	\cite{harj1985} For $x,~y \in \mathbb{R}^n$, the following results hold:
	\begin{itemize}
		\item[(a)] $\Vert\vert x\vert-\vert y\vert\Vert \leq \Vert x - y \Vert$;
		\item[(b)] if $0 \leq x \leq y$, then $\Vert x\Vert\leq \Vert y\Vert$;
		\item[(c)] if $x \leq y$ and $U \geq 0$, then $Ux \leq Uy$.
	\end{itemize}
\end{lem}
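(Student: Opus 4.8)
The plan is to verify all three statements by elementary componentwise arguments, since each is a claim about the componentwise partial order on $\mathbb{R}^n$ together with the Euclidean norm. Because part~(a) will lean on the monotonicity asserted in part~(b), I would prove (b) and (c) first and treat (a) last.

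For part~(b), I would start from the hypothesis $0 \le x \le y$, read componentwise as $0 \le x_i \le y_i$ for every $i$, and square to obtain $x_i^2 \le y_i^2$. Summing over $i$ yields $\Vert x\Vert^2 = \sum_i x_i^2 \le \sum_i y_i^2 = \Vert y\Vert^2$, and taking square roots gives $\Vert x\Vert \le \Vert y\Vert$. For part~(c), writing $U = (u_{ij})$ with $u_{ij} \ge 0$ and using $y - x \ge 0$, each entry of $U(y-x)$ satisfies $\bigl(U(y-x)\bigr)_i = \sum_j u_{ij}(y_j - x_j) \ge 0$ as a sum of products of nonnegative numbers; hence $Ux \le Uy$.

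For part~(a), I would first record the scalar reverse triangle inequality $\bigl\vert\,\vert x_i\vert - \vert y_i\vert\,\bigr\vert \le \vert x_i - y_i\vert$ for each component, which assembles into the componentwise vector inequality $0 \le \bigl\vert\vert x\vert - \vert y\vert\bigr\vert \le \vert x - y\vert$. Applying part~(b) to this chain gives $\bigl\Vert\,\vert x\vert - \vert y\vert\,\bigr\Vert \le \bigl\Vert\,\vert x - y\vert\,\bigr\Vert$. Finally I would use the sign-invariance of the Euclidean norm, namely $\Vert\,\vert z\vert\,\Vert = \Vert z\Vert$ for any $z \in \mathbb{R}^n$ (since $\sum_i \vert z_i\vert^2 = \sum_i z_i^2$), to replace the right-hand side by $\Vert x - y\Vert$, completing the proof.

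There is no genuine obstacle here: the result is a standard collection of facts about the order structure and the $2$-norm. The only point requiring care is the logical ordering, since (a) is most cleanly obtained by invoking (b) rather than reproving it from scratch, and one must not forget to use the invariance $\Vert\,\vert z\vert\,\Vert = \Vert z\Vert$ when passing from $\vert x - y\vert$ back to $x - y$. Since the statement is quoted from \cite{harj1985}, it would also be entirely legitimate to cite it directly rather than reproduce this elementary verification.
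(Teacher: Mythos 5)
Your componentwise verification is correct in all three parts: squaring and summing for (b), nonnegativity of $U(y-x)$ for (c), and the scalar reverse triangle inequality combined with (b) and the identity $\Vert\,\vert z\vert\,\Vert=\Vert z\Vert$ for (a). The paper itself offers no proof — it simply cites Horn and Johnson — so your elementary argument is a legitimate self-contained substitute and there is nothing to reconcile.
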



\begin{lem}\label{lemma:4}\cite[Theorem 1.5]{ys2003}
For $U\in \mathbb{R}^{n\times n}$, the series $\sum\limits_{k=0}^{\infty}U^k$ converges if only if $\rho(U)<1$ and we have $\sum\limits_{k=0}^{\infty}U^k=(I-U)^{-1}$ whenever it converges.
\end{lem}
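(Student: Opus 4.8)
The plan is to split the biconditional and treat the two directions separately, using the auxiliary equivalence $\lim_{k\to\infty}U^k = 0 \iff \rho(U)<1$ as the workhorse. First I would record and justify this auxiliary fact. For the direction $\rho(U)<1 \Rightarrow U^k\to 0$, I would invoke Gelfand's spectral radius formula $\rho(U)=\lim_{k\to\infty}\Vert U^k\Vert^{1/k}$: choosing $\varepsilon>0$ with $\rho(U)+\varepsilon<1$ yields $\Vert U^k\Vert \le (\rho(U)+\varepsilon)^k$ for all large $k$, whence $U^k\to 0$. For the converse, if some eigenvalue $\lambda$ of $U$ satisfied $|\lambda|\ge 1$ with eigenvector $v\ne 0$, then $\Vert U^k v\Vert = |\lambda|^k\Vert v\Vert \ge \Vert v\Vert$ would fail to tend to $0$, contradicting $U^k\to 0$; hence $\rho(U)<1$. (Alternatively, this equivalence follows at once from the Jordan canonical form, since a Jordan block with eigenvalue $\lambda$ has powers tending to $0$ precisely when $|\lambda|<1$.)

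With this in hand, I would prove the forward implication of the lemma. If $\sum_{k=0}^{\infty}U^k$ converges, then its partial sums $S_n=\sum_{k=0}^{n}U^k$ form a convergent, hence Cauchy, sequence, so the general term $U^n = S_n - S_{n-1}$ must tend to $0$. By the auxiliary fact this forces $\rho(U)<1$.

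For the reverse implication, together with the value of the sum, I would assume $\rho(U)<1$. Then $1$ is not an eigenvalue of $U$, so $I-U$ is nonsingular. The key algebraic step is the telescoping identity
\begin{equation*}
(I-U)\sum_{k=0}^{n}U^k = I - U^{n+1},
\end{equation*}
which rearranges to $S_n = (I-U)^{-1}(I-U^{n+1})$. Since $\rho(U)<1$ gives $U^{n+1}\to 0$, letting $n\to\infty$ yields $S_n\to (I-U)^{-1}$; thus the series converges and equals $(I-U)^{-1}$.

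The main obstacle is essentially just the auxiliary equivalence $U^k\to 0 \iff \rho(U)<1$, which is where all the spectral information enters and which relies on either Gelfand's formula or the Jordan form. Once that is secured, both directions of the lemma reduce to elementary observations: the Cauchy criterion for the necessity of $\rho(U)<1$, and the telescoping identity for both the convergence and the identification of the limit as $(I-U)^{-1}$. Everything else is routine.
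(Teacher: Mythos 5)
Your proof is correct and is essentially the standard argument for this result; the paper itself gives no proof but cites \cite[Theorem 1.5]{ys2003}, whose proof proceeds exactly as you do, via the equivalence $U^k\to 0 \iff \rho(U)<1$ (also recorded separately in the paper as Lemma 2.4) together with the telescoping identity $(I-U)\sum_{k=0}^{n}U^k=I-U^{n+1}$. Nothing is missing.
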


\begin{lem}\label{lem:conv}
	\cite[Corollary 4.4.1]{serre2002} For $U\in \mathbb{R}^{n\times n}$, $\lim\limits_{k \to +\infty}U^k=0$ if and only if $\rho(U)<1.$
\end{lem}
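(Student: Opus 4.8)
The plan is to establish the two implications separately, since the statement is a biconditional. The necessity direction ($\lim_{k\to+\infty} U^k = 0 \Rightarrow \rho(U) < 1$) is the quick one: if $\lambda \in \mathbb{C}$ is any eigenvalue of $U$ with a (possibly complex) eigenvector $v \neq 0$, then $U^k v = \lambda^k v$ for every $k$, and letting $k \to +\infty$ forces $\lambda^k v \to 0$ because $U^k \to 0$ entrywise; since $v \neq 0$, this gives $|\lambda|^k \to 0$, hence $|\lambda| < 1$. Taking the maximum over all eigenvalues yields $\rho(U) < 1$.

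The substantive direction is sufficiency ($\rho(U) < 1 \Rightarrow \lim_{k\to+\infty} U^k = 0$), and here I would pass to the Jordan canonical form over $\mathbb{C}$. Write $U = P J P^{-1}$ with $J$ block diagonal, each block of the form $J_i = \lambda_i I + N$, where $\lambda_i$ is an eigenvalue and $N$ is the nilpotent shift. Then $U^k = P J^k P^{-1}$, so it suffices to show that each block power satisfies $J_i^k \to 0$. Since $\lambda_i I$ and $N$ commute, the binomial theorem together with the nilpotency of $N$ gives the finite sum $J_i^k = \sum_{j=0}^{m-1} \binom{k}{j} \lambda_i^{k-j} N^j$, where $m$ is the block size. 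Each scalar coefficient $\binom{k}{j}\lambda_i^{k-j}$ is a polynomial in $k$ of fixed degree times the geometric factor $\lambda_i^{k}$; because $|\lambda_i| \le \rho(U) < 1$, the polynomial growth is dominated by the geometric decay, so every entry of $J_i^k$ tends to $0$. Hence $J^k \to 0$, and conjugation by the fixed matrices $P, P^{-1}$ preserves this limit, so $U^k \to 0$.

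The main obstacle is the sufficiency direction, and specifically the bookkeeping of the nilpotent/binomial expansion together with the scalar limit $\lim_{k\to+\infty}\binom{k}{j}|\lambda_i|^{k-j}=0$ for $|\lambda_i|<1$; this is where a genuine argument rather than the one-line eigenvalue observation of the necessity direction is required. A clean alternative that sidesteps the Jordan form is to invoke the fact that for every $\varepsilon > 0$ there exists a submultiplicative matrix norm $\Vert\cdot\Vert_\varepsilon$ with $\Vert U\Vert_\varepsilon \le \rho(U) + \varepsilon$; choosing $\varepsilon$ so that $\rho(U) + \varepsilon < 1$ gives $\Vert U^k\Vert_\varepsilon \le \Vert U\Vert_\varepsilon^{\,k} \le (\rho(U)+\varepsilon)^k \to 0$, and the equivalence of all norms on $\mathbb{R}^{n\times n}$ transfers this to entrywise convergence. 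Either route works; the Jordan-form computation is the more elementary and self-contained of the two, so that is the one I would write out in full.
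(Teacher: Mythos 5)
Your proof is correct. Note, however, that the paper does not prove this lemma at all: it is quoted verbatim from \cite[Corollary 4.4.1]{serre2002} and used as a known black-box result, so there is no in-paper argument to compare against. Both routes you sketch are sound and standard: the necessity direction via $U^k v=\lambda^k v$ is airtight (including for complex eigenvectors, since entrywise convergence of $U^k$ to $0$ gives $U^k v\to 0$ for complex $v$), and the sufficiency direction via the Jordan block expansion $J_i^k=\sum_{j=0}^{m-1}\binom{k}{j}\lambda_i^{k-j}N^j$ correctly reduces everything to the scalar limit $\binom{k}{j}|\lambda_i|^{k-j}\to 0$ for $|\lambda_i|<1$ (polynomial growth beaten by geometric decay), with the $\lambda_i=0$ blocks handled trivially by nilpotency. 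The alternative you mention --- choosing an operator norm with $\Vert U\Vert_\varepsilon\le\rho(U)+\varepsilon<1$ and using submultiplicativity plus equivalence of norms --- is in fact the argument closest to the one in Serre's book; either write-up would serve as a complete, self-contained replacement for the citation.
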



\begin{lem}\label{lemma:5}\cite[Exercise 8.1.16]{harj1985} For any matrices $U,~V \in\mathbb{R}^{n\times n} $, if $0 \leq U \leq V$, then $\Vert U\Vert \leq \Vert V\Vert$.
\end{lem}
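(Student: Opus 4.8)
The plan is to reduce the matrix inequality to a scalar estimate via the variational characterization of the spectral norm as a bilinear form, which turns the componentwise hypothesis $0\le U\le V$ into an easy termwise comparison. Recall that for the $2$-norm,
\begin{equation*}
\|U\|=\max_{\|x\|=1,\ \|y\|=1} y^\top U x,
\end{equation*}
and the maximum is attained at some pair of unit vectors $x^\ast,y^\ast$ by compactness of the unit sphere. First I would fix such maximizers, so that $\|U\|=(y^\ast)^\top U x^\ast=\sum_{i,j} y_i^\ast u_{ij} x_j^\ast$.

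The central step is to pass to componentwise absolute values. Since $u_{ij}\ge 0$ for all $i,j$, each term satisfies $y_i^\ast u_{ij} x_j^\ast\le |y_i^\ast|\,u_{ij}\,|x_j^\ast|$, and then $u_{ij}\le v_{ij}$ together with $|x_j^\ast|,|y_i^\ast|\ge 0$ gives $|y_i^\ast|\,u_{ij}\,|x_j^\ast|\le |y_i^\ast|\,v_{ij}\,|x_j^\ast|$. Summing over all indices yields
\begin{equation*}
\|U\|\le \sum_{i,j} |y_i^\ast|\,v_{ij}\,|x_j^\ast|=|y^\ast|^\top V\,|x^\ast|.
\end{equation*}

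Finally I would bound the right-hand side back against $\|V\|$. Because the $2$-norm is invariant under taking componentwise absolute values, $\||x^\ast|\|=\|x^\ast\|=1$ and $\||y^\ast|\|=1$, so $|x^\ast|$ and $|y^\ast|$ are again unit vectors and hence $|y^\ast|^\top V|x^\ast|\le \|V\|$. Chaining the three inequalities gives $\|U\|\le\|V\|$, as desired. The only delicate point is the sign bookkeeping in the middle step, which rests entirely on $U\ge 0$: without the nonnegativity of $U$ one could not replace $y_i^\ast x_j^\ast$ by $|y_i^\ast|\,|x_j^\ast|$ while preserving the inequality, and the conclusion can indeed fail. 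An alternative would be to use $\|U\|^2=\rho(U^\top U)$, observe that $0\le U\le V$ forces $0\le U^\top U\le V^\top V$ entrywise, and invoke the monotonicity of the spectral radius on nonnegative matrices; this also works but leans on a Perron--Frobenius fact not recorded in the excerpt, so the bilinear-form route is the more self-contained choice.
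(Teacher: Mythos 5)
Your argument is correct. Note, however, that the paper does not prove this lemma at all: it is quoted as Exercise 8.1.16 of Horn and Johnson's \emph{Matrix Analysis} and used as a black box, so there is no in-paper proof to compare against. Your bilinear-form route --- writing $\Vert U\Vert=\max_{\Vert x\Vert=\Vert y\Vert=1}y^\top Ux$, fixing maximizers by compactness, passing to $|x^\ast|,|y^\ast|$ using the nonnegativity of $U$, comparing entrywise with $V$, and finishing with the invariance of the Euclidean norm under componentwise absolute value --- is a clean, self-contained proof; every step checks out. Your closing remark that $U\geq 0$ is indispensable is also right (already in dimension one, $U=(-1)$ and $V=(0)$ satisfy $U\leq V$ yet $\Vert U\Vert>\Vert V\Vert$), and the alternative via $\Vert U\Vert^2=\rho(U^\top U)$ together with the entrywise bound $0\leq U^\top U\leq V^\top V$ and monotonicity of the spectral radius on nonnegative matrices would work equally well, but as you observe it leans on Perron--Frobenius material that the paper nowhere records, so your first route is the better fit here.
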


\begin{lem}\label{lemma:6}\cite[Proposition 4.1.6]{serre2002}
For any $U\in\mathbb{R}^{n\times n}$ and the induced norm $\Vert \cdot\Vert$ on $\mathbb{R}^n$, we have  $\rho(U)\leq\Vert U\Vert.$
\end{lem}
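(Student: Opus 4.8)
The plan is to exploit the defining property of an induced norm together with an eigenpair of $U$ that realizes the spectral radius. First I would recall that $\rho(U) = \max\{|\lambda| : \lambda \text{ is an eigenvalue of } U\}$, and select an eigenvalue $\lambda$ with $|\lambda| = \rho(U)$ together with a nonzero eigenvector $v$, so that $Uv = \lambda v$. Applying the norm gives $\|Uv\| = |\lambda|\,\|v\|$, while the definition of the induced norm $\|U\| = \max\{\|Ux\| : \|x\| = 1\}$ yields $\|Uv\| \le \|U\|\,\|v\|$. Combining these two relations and dividing by $\|v\| > 0$ produces $\rho(U) = |\lambda| \le \|U\|$, which is exactly the claim.

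The main obstacle is that $U$, though real, may possess a complex eigenvalue $\lambda$ with a complex eigenvector $v \in \mathbb{C}^n$, whereas the induced norm in the statement is a priori defined only on $\mathbb{R}^n$. I would resolve this by passing to the complexification: the given norm extends to a norm on $\mathbb{C}^n$, and for the $2$-norm used throughout this paper the real and complex operator norms of a real matrix coincide, so the inequality $\|Uv\| \le \|U\|\,\|v\|$ persists on $\mathbb{C}^n$ and the argument above carries over verbatim. In the case where $\lambda$ is real one may simply take $v \in \mathbb{R}^n$, and no complexification is required.

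As an alternative route that sidesteps complex vectors entirely, I would invoke submultiplicativity of the induced norm, which gives $\|U^k\| \le \|U\|^k$ for every $k \ge 1$. Since $\rho(U)^k = \rho(U^k)$, combining this estimate with Gelfand's spectral radius formula $\rho(U) = \lim_{k\to\infty}\|U^k\|^{1/k}$ again yields $\rho(U) \le \|U\|$. This second approach is heavier, as it relies on Gelfand's formula, so I would present the eigenpair argument as the primary proof and retain the submultiplicativity argument only as a backup.
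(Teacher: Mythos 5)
Your argument is correct. Note that the paper does not prove this lemma at all --- it is quoted directly from \cite[Proposition 4.1.6]{serre2002} as a known result --- so there is no in-paper proof to compare against; your eigenpair argument is the standard textbook proof, and you correctly identify and resolve the one genuine subtlety (a real matrix may only attain $\rho(U)$ at a complex eigenvalue, so one must either complexify or restrict to the $2$-norm, for which the real and complex operator norms of a real matrix coincide). The Gelfand-formula backup is also valid but unnecessary here.
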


\section{The GNMS method and its convergence analysis}\label{sec:main}
Let $|x|=Qy$\footnote{This variable transformation was formally introduced in the modified fixed point iteration method for solving AVEs~\eqref{eq:ave} \cite{yuch2022} and further used in \cite{zhzl2023}.}  with $Q\in \mathbb{R}^{n\times n}$ being invertible. Then the GAVEs~\eqref{eq:gave} can be rewritten as
\begin{equation}\label{eq:two}
\begin{cases}
	Qy - |x| = 0  ,\\
	Ax - BQy = c.
\end{cases}
\end{equation}
By splitting matrices $A$ and $Q$ as
$$ A= M-N \quad \text{and}\quad Q = Q_1-Q_2$$
with $M$ and $Q_1$ being nonsingular, the two-by-two nonlinear system \eqref{eq:two} is equivalent to
\begin{equation}\label{eq:etwo}
	\begin{cases}
		Q_1 y= Q_1 y -\tau Q_1y + \tau Q_2y + \tau |x|,\\
		Mx = Nx + BQ_1 y- BQ_2 y + c,
	\end{cases}
\end{equation}
in which $\tau$ is a positive constant. According to \eqref{eq:etwo}, we can develop the following iteration method for solving the GAVEs~\eqref{eq:gave}.

\begin{alg}\label{alg}\label{alg:gnms}
Let $Q\in \mathbb{R}^{n\times n}$ be a nonsingular matrix and $A=M-N$, $Q = Q_1-Q_2$ with $M$ and $Q_1$ being invertible. Given initial vectors $x^0\in \mathbb{R}^{n}$ and $y^0\in \mathbb{R}^{n}$, for $k=0,1,2,\cdots$ until the iteration sequence $\{(x^k,y^k)\}^\infty_{k=0} $ is convergent, compute
	\begin{equation}\label{eq:gnms}
		\begin{cases}
		y^{k+1}=(1-\tau)y^k+\tau{Q_1}^{-1}\left(Q_2y^k+ |x^k|\right),\\
			x^{k+1}=M^{-1}(Nx^k+BQ_1y^{k+1}-BQ_2y^k + c),
		\end{cases}
	\end{equation}
where the relaxation parameter $\tau>0$.
\end{alg}

Unlike the methods proposed in \cite{wacc2019,zhwl2021,shzh2023,liyi2021,zhsh2023}, the iteration scheme \eqref{eq:gnms} updates $y$ first and then $x$. On the one hand, surprisingly, the iteration scheme \eqref{eq:gnms} reduces to the NMS iteration \eqref{eq:nms} whenever $Q=Q_1 = I, Q_2=0, M = \bar{M} + \Omega, N = \bar{N} + \Omega$ and $\tau = 1$. On the other hand, the iteration scheme \eqref{eq:gnms} frequently can not be rewritten as the form of the NMS iteration \eqref{eq:nms}. Indeed, the iteration scheme \eqref{eq:gnms} can be reformulated as
\begin{equation}\label{eq:rgnms}
\begin{cases}
	y^{k+1}=(1-\tau)y^k+\tau{Q_1}^{-1}Q_2y^k+\tau{Q_1}^{-1}|x^k|,\\
	x^{k+1}=M^{-1}(Nx^k+ (1-\tau)BQy^{k} + \tau B|x^k| + c),
\end{cases}
\end{equation}
which can not be reduced to \eqref{eq:nms} provided that $\tau \neq 1$. That is why the proposed method is called as the GNMS iteration method. According to the statements in Section~\ref{sec:intro}, by approximate choosing $M,N,Q_1,Q_2$ and $\tau$, it is easy to see that the GNMS iteration method also involves the Picard iteration method, the MN method, the SSMN method, the RNMS method and their relaxation versions (if exist) as special cases.

Now we are in the position to show the convergence of the GNMS iterative method. Denote
\begin{equation}\label{eq:nota}
\alpha=\Vert Q_1^{-1}Q_2\Vert,~\beta=\Vert Q_1^{-1}\Vert,~\gamma=\Vert M^{-1}N\Vert,~\mu=\Vert M^{-1}BQ_1\Vert,~\nu=\Vert M^{-1}BQ_2\Vert.
\end{equation}
Then we have the following convergence theorem.

\begin{thm}\label{theorem:5}
Let $A = M - N$ and $Q = Q_1 - Q_2$ with $M$ and $Q_1$ being  nonsingular matrices. If
\begin{equation}\label{eq:con1}
\big\vert \gamma\vert 1-\tau\vert+\tau(\gamma\alpha-\beta \nu)\big\vert < 1\quad \text{and}\quad \tau(\mu\beta+\beta \nu) < (\gamma-1)(\vert 1-\tau\vert+\tau\alpha-1),
\end{equation}
then the GAVEs~\eqref{eq:gave} has a unique solution $x^*$ and the sequence $\{(x^k,~y^k)\}^\infty_{k=0}$ generated by~\eqref{eq:gnms} converges to $(x^*,y^* = Q^{-1}|x^*|)$.
\end{thm}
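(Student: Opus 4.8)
The plan is to turn \eqref{eq:gnms} into a linear recursion for the error, reduce the entire statement to the single spectral condition $\rho(W)<1$ for an explicit nonnegative $2\times 2$ matrix $W$, and finally recognize the two inequalities in \eqref{eq:con1} as exactly the determinant and trace conditions delivered by Lemma~\ref{lemma:1}. Throughout I treat the pair of error norms as a vector in $\mathbb{R}^2$ and propagate componentwise inequalities, so that the nonnegativity of $W$ together with Lemmas~\ref{lemma:2}, \ref{lemma:4} and \ref{lem:conv} can do the final work.

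First I would record that any solution $x^*$ of \eqref{eq:gave}, together with $y^*=Q^{-1}|x^*|$, is a fixed point of \eqref{eq:gnms}: using $Q=Q_1-Q_2$ and $A=M-N$, the two fixed-point identities collapse respectively to $Qy^*=|x^*|$ and $Ax^*=B|x^*|+c$. Subtracting these from \eqref{eq:gnms} and setting $e_y^k=y^k-y^*$, $e_x^k=x^k-x^*$ gives
\begin{equation*}
e_y^{k+1}=(1-\tau)e_y^k+\tau Q_1^{-1}Q_2e_y^k+\tau Q_1^{-1}\big(|x^k|-|x^*|\big),
\end{equation*}
\begin{equation*}
e_x^{k+1}=M^{-1}Ne_x^k+M^{-1}BQ_1e_y^{k+1}-M^{-1}BQ_2e_y^k.
\end{equation*}
Taking $2$-norms, invoking $\big\||x^k|-|x^*|\big\|\le\|e_x^k\|$ (Lemma~\ref{lemma:2}(a)) and the abbreviations \eqref{eq:nota}, and then substituting the bound on $\|e_y^{k+1}\|$ into that for $\|e_x^{k+1}\|$, I obtain
\begin{equation*}
\begin{pmatrix}\|e_y^{k+1}\|\\ \|e_x^{k+1}\|\end{pmatrix}\le W\begin{pmatrix}\|e_y^{k}\|\\ \|e_x^{k}\|\end{pmatrix},\qquad
W=\begin{pmatrix} |1-\tau|+\tau\alpha & \tau\beta\\[2pt] \mu(|1-\tau|+\tau\alpha)+\nu & \gamma+\tau\beta\mu\end{pmatrix}\ge 0.
\end{equation*}

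The heart of the argument is to show $\rho(W)<1$. Since $W$ is $2\times 2$, its eigenvalues solve $\lambda^2-(\operatorname{tr}W)\lambda+\det W=0$, and a short computation shows the $\mu$-terms cancel in the determinant, leaving $\det W=\gamma|1-\tau|+\tau(\gamma\alpha-\beta\nu)$ and $\operatorname{tr}W=|1-\tau|+\tau\alpha+\gamma+\tau\beta\mu\ge 0$. Applying Lemma~\ref{lemma:1} with $s=\operatorname{tr}W$ and $q=\det W$, both eigenvalues lie in the open unit disc iff $|q|<1$ and (as $s\ge 0$) $s<1+q$. The former is the left inequality of \eqref{eq:con1}, while rearranging $s<1+q$ yields $\tau\beta(\mu+\nu)<(\gamma-1)(|1-\tau|+\tau\alpha-1)$, i.e. the right inequality. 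Hence \eqref{eq:con1} is precisely equivalent to $\rho(W)<1$.

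Finally I would harvest the consequences of $\rho(W)<1$ and $W\ge 0$. The same estimate applied to consecutive iterates shows $(\|y^{k+1}-y^k\|,\|x^{k+1}-x^k\|)^\top$ obeys the identical $W$-recursion; since $\sum_kW^k=(I-W)^{-1}$ converges (Lemma~\ref{lemma:4}), these differences are summable, so $\{(x^k,y^k)\}$ is Cauchy and its limit $(x^*,y^*)$ is, by passing to the limit in \eqref{eq:gnms}, a fixed point, giving a solution of \eqref{eq:gave} with $y^*=Q^{-1}|x^*|$. Uniqueness follows because any two solutions yield an error vector $z\ge0$ with $(I-W)z\le0$; multiplying by the nonnegative matrix $(I-W)^{-1}$ forces $z\le0$, hence $z=0$. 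Convergence to this unique $(x^*,y^*)$ is then immediate from iterating the error inequality together with $W^k\to0$ (Lemma~\ref{lem:conv}). I expect the only genuine obstacle to be bookkeeping: correctly eliminating the implicit $y^{k+1}$ appearing in the $x$-update when assembling $W$, and then carrying out the cancellation that collapses $\det W$ and the trace inequality into the exact form \eqref{eq:con1}. The passage from a spectral-radius bound to the two scalar inequalities via Young's lemma is clean once $W$ has been identified.
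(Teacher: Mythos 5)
Your proposal is correct and follows essentially the same route as the paper: both reduce the claim to $\rho(W)<1$ for the same nonnegative $2\times 2$ matrix $W$ (obtained by eliminating the implicit $y^{k+1}$ from the $x$-update), identify $\det W=\gamma|1-\tau|+\tau(\gamma\alpha-\beta\nu)$ and the trace inequality with the two conditions in \eqref{eq:con1} via Lemma~\ref{lemma:1} (using $\operatorname{tr}W\ge 0$ to drop the absolute value on $s$, exactly as the paper does implicitly), and obtain existence from the Cauchy property of the consecutive differences together with Lemmas~\ref{lemma:4} and~\ref{lem:conv}. The only genuine deviation is the uniqueness step: the paper eliminates $\Vert y^*-\bar{y}^*\Vert$ and $\Vert x^*-\bar{x}^*\Vert$ by hand to reach a strict-inequality contradiction, whereas you observe that the nonnegative difference vector $z$ of two solutions satisfies $z\le Wz$ and multiply by $(I-W)^{-1}=\sum_{k\ge 0}W^k\ge 0$ to force $z=0$; this is valid and arguably cleaner, with the one bookkeeping caveat that verifying the second component of $z\le Wz$ requires substituting the first component's bound, since at a fixed point the $y$-coefficient in the $x$-equation is $\mu+\nu$ rather than the corresponding entries of $W$.
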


\begin{proof}
It follows from \eqref{eq:gnms} that
\begin{equation}\label{eq:gnmsm}
		\begin{cases} y^{k}=(1-\tau)y^{k-1}+\tau{Q_1}^{-1}\left(Q_2y^{k-1}+ |x^{k-1}|\right),\\
			x^{k}=M^{-1}(Nx^{k-1} +BQ_1y^{k}-BQ_2y^{k-1} + c),
		\end{cases}
	\end{equation}
Subtracting \eqref{eq:gnmsm} from \eqref{eq:gnms}, we have
\begin{equation}\label{eq:mm}
		\begin{cases} y^{k+1} - y^{k} = (1-\tau)(y^k - y^{k-1})+\tau{Q_1}^{-1}\left[Q_2(y^k - y^{k-1}) +(|x^k| - |x^{k-1}|)\right],\\
			x^{k+1} - x^{k} = M^{-1}\left[(N(x^k - x^{k-1}) +BQ_1(y^{k+1} - y^{k}) -BQ_2(y^k - y^{k-1})\right],
		\end{cases}
	\end{equation}
from which, Lemma~\ref{lemma:2}~(a) and \eqref{eq:nota}, we have
\begin{equation}\label{neq:ss}
		\begin{aligned}
			\Vert y^{k+1}-y^k\Vert&
			\leq\vert 1-\tau\vert\Vert y^k - y^{k-1} \Vert+\tau\alpha\Vert y^k - y^{k-1}\Vert+\tau \beta\Vert\Vert x^k - x^{k-1}\Vert,\\
			\Vert x^{k+1} - x^k\Vert&
			\leq\gamma \Vert x^k - x^{k-1}\Vert
+\mu \Vert y^{k+1} - y^k \Vert+\nu  \Vert y^k - y^{k-1}\Vert,
		\end{aligned}
	\end{equation}
from which we have
	\begin{equation}\label{eq:wss}
		\begin{bmatrix}
			1 & 0\\
			-\mu & 1
		\end{bmatrix}
		\begin{bmatrix}
			\Vert y^{k+1} - y^k\Vert\\
			\Vert x^{k+1} -x^k\Vert
		\end{bmatrix}
		\leq
		\begin{bmatrix}
			\vert1-\tau\vert+\tau\alpha & \tau\beta\\
			 \nu & \gamma
		\end{bmatrix}
		\begin{bmatrix}
			\Vert y^k - y^{k-1}\Vert\\
			\Vert x^k - x^{k-1}\Vert
		\end{bmatrix}.			
	\end{equation}
Multiplying both sides of \eqref{eq:wss} from the left by the nonnegative matrix
$
	P =
	\begin{bmatrix}
		1 & 0\\
		\mu & 1
	\end{bmatrix}
$ and using Lemma \ref{lemma:2}~(c), we have
 \begin{equation}\label{neq:ssnorm}
 	\begin{bmatrix}
 		\Vert y^{k+1} - y^k\Vert\\
 		\Vert x^{k+1} - x^k\Vert
 	\end{bmatrix}
 	\leq W
 	\begin{bmatrix}
 		\Vert y^k - y^{k-1}\Vert\\
 		\Vert x^k - x^{k-1}\Vert
 	\end{bmatrix},
 \end{equation}
 where
$$W=
	\begin{bmatrix}
		\vert1-\tau\vert+\tau\alpha & \tau\beta\\
		(\vert1-\tau\vert+\tau\alpha)\mu+\nu & \tau\beta \mu+\gamma
	\end{bmatrix}.
$$
For each $m\geq1$, if $\rho(W)<1$, it follows from \eqref{neq:ssnorm}, Lemma~\ref{lemma:4} and Lemma~\ref{lem:conv} that
	\begin{align}\label{neq:cauchy}
		\begin{bmatrix}
			\Vert y^{k+m}-y^k \Vert\\
			\Vert x^{k+m}-x^k \Vert
		\end{bmatrix}
		&=
		\begin{bmatrix}
			\Big\Vert \sum\limits_{j=0}^{m-1} (y^{k+j+1}-y^{k+j}) \Big\Vert\\
			\Big\Vert\sum\limits_{j=0}^{m-1} (x^{k+j+1}-x^{k+j}) \Big\Vert
		\end{bmatrix}
		\leq
		\begin{bmatrix}
			\sum\limits_{j=0}^{\infty}\Vert (y^{k+j+1}-y^{k+j}) \Vert\\
			\sum\limits_{j=0}^{\infty}\Vert (x^{k+j+1}-x^{k+j}) \Vert
		\end{bmatrix}\nonumber\\
		&\leq \sum_{j=0}^{\infty} W^{j+1}
		\begin{bmatrix}
			\Vert y^k-y^{k-1}\Vert\\
			\Vert x^k-x^{k-1}\Vert
		\end{bmatrix}
		=
		(I-W)^{-1}W
		\begin{bmatrix}
			\Vert y^k-y^{k-1}\Vert\\
			\Vert x^k-x^{k-1}\Vert
		\end{bmatrix}\nonumber\\
		&\leq(I-W)^{-1}W^k
		\begin{bmatrix}
			\Vert y^1-y^0\Vert\\
			\Vert x^1-x^0\Vert
		\end{bmatrix}\rightarrow \begin{bmatrix}
			0\\
			0
		\end{bmatrix} (\text{as} \quad k\rightarrow \infty).
	\end{align}
Thus, both $\{y^k\}^\infty_{k=0}$ and $\{x^k\}^\infty_{k=0}$ is Cauchy Sequence whenever $\rho(W)<1$. Then, from \cite[Theorem 5.4.10]{harj1985}, $\{x^k\}^\infty_{k=0}$ and $\{y^k\}^\infty_{k=0}$ are  convergent. Let $\lim_{k\rightarrow\infty}y^k = y^*$ and $\lim_{k\rightarrow\infty}x^k = x^*$. Then it follows from \eqref{eq:gnms} that
\begin{equation}\label{eq:lgnms}
		\begin{cases}
		y^*=(1-\tau)y^*+\tau{Q_1}^{-1}\left(Q_2y^*+ |x^*|\right),\\
			x^*=M^{-1}(Nx^*+BQ_1y^*-BQ_2y^* + c),
		\end{cases}
	\end{equation}
which implies that
\begin{equation}\label{eq:lgnms}
		\begin{cases}
		Qy^*= |x^*|,\\
		Ax^* - B|x^*| - c = 0,
		\end{cases}
	\end{equation}
that is, $x^*$ is a solution to GAVE~\eqref{eq:gave}. In the following, we will prove that $\rho(W)<1$ if \eqref{eq:con1} holds. For simplicity, we denote the matrix $W$ as
$$
	W=
	\begin{bmatrix}
		f & g\\
		f\mu+\nu & g\mu+\gamma\\
	\end{bmatrix},
$$
where
	\begin{equation}\label{eq:fg}
		f=\vert1-\tau\vert+\tau\alpha \quad \text{and} \quad g=\tau\beta.
	\end{equation}
Suppose that $\lambda$ is an eigenvalue of $W$, then
	\begin{equation}\label{eq:det}
		\det(\lambda I - W) = \det
		\begin{bmatrix}
		\lambda-f & -g\\
			-f\mu-\nu & \lambda-(g\mu+\gamma)
		\end{bmatrix}
	=0,
	\end{equation}
from which we have
\begin{equation*}
	\lambda^2-(f+\mu g+\gamma)\lambda+\gamma f-g\nu=0,
\end{equation*}
it follows from Lemma \ref{lemma:1} that $|\lambda|<1$ if and only if
$$
\vert \gamma f-g\nu\vert<1\quad \text{and} \quad
		f+\mu g+\gamma<1+\gamma f-g\nu,
$$
that is
$$
		\big\vert \gamma\vert 1-\tau\vert+\tau(\gamma\alpha-\beta \nu)\big\vert < 1\quad \text{and} \quad
		\tau(\mu\beta+\beta \nu) < (\gamma-1)(\vert 1-\tau\vert+\tau\alpha-1),
$$
which is \eqref{eq:con1}.

Finally, we will prove the unique solvability. In contrast, suppose that $x^*$ and $\bar{x}^*$ are two different solutions of the GAVEs~\eqref{eq:gave}, then we have
	\begin{subequations}
		\begin{align}
			\Vert y^*-\bar{y}^*\Vert&\leq\vert1-\tau\vert\Vert y^*-\bar{y}^*\Vert+\tau\alpha\Vert y^*-\bar{y}^*\Vert+\tau\beta\Vert x^*-\bar{x}^*\Vert,\label{3.9a}\\
			\Vert x^*-\bar{x}^*\Vert&\leq \gamma\Vert x^*-\bar{x}^*\Vert+\mu\Vert y^*-\bar{y}^*\Vert+\nu\Vert y^*-\bar{y}^*\Vert \label{3.9b},
		\end{align}   	
	\end{subequations}
where $y^* = Q^{-1}|x^*|$ and $\bar{y}^* = Q^{-1}|\bar{x}^*|$. Note that it can be deduced from \eqref{eq:con1} that $\gamma<1$ and $|1-\tau|+\tau\alpha<1$, it follows from \eqref{3.9a} and \eqref{3.9b} that
	\begin{subequations}
		\begin{align}
			\Vert y^*-\bar{y}^*\Vert&\leq\dfrac{\tau\beta}{1-(|1-\tau|+\tau\alpha)}\Vert x^*-\bar{x}^*\Vert,\label{3.10a}\\
			\Vert x^*-\bar{x}^*\Vert&\leq\dfrac{\mu+\nu}{1-\gamma}\Vert y^*-\bar{y}^*\Vert.\label{3.10b}
		\end{align}
	\end{subequations}
It follows from \eqref{3.9a}, \eqref{3.10b} and the second inequality of \eqref{eq:con1} that
\begin{eqnarray}\label{neq: y*}
		\Vert y^*-\bar{y}^*\Vert&\leq&\vert1-\tau\vert\Vert y^*-\bar{y}^*\Vert+\tau\alpha\Vert y^*-\bar{y}^*\Vert+\dfrac{\tau\beta(\mu+\nu)}{1-\gamma}\Vert y^*-\bar{y}^*\Vert\nonumber\\
		&<&\vert1-\tau\vert\Vert y^*-\bar{y}^*\Vert+\tau\alpha\Vert y^*-\bar{y}^*\Vert+[1-(|1-\tau|+\tau\alpha)]\Vert y^*-\bar{y}^*\Vert\nonumber\\
		&=&\Vert y^*-\bar{y}^*\Vert,
\end{eqnarray}
which will lead to a contradiction whenever $y^*\neq \bar{y}^*$ (since $x^*\neq \bar{x}^*$). Hence, we have $x^*=\bar{x}^*$.
\end{proof}

\begin{cor}\label{cor:conv}
If
\begin{equation}\label{eq:con2}
\vert \gamma\alpha-\beta \nu\vert<\gamma<1, \beta(\mu+\nu)<(\gamma-1)(\alpha-1),
0<\tau <\dfrac{2(1-\gamma)}{\beta(\mu+\nu)-(\gamma-1)(\alpha+1)},
\end{equation}
then the GAVEs~\eqref{eq:gave} has a unique solution $x^*$ and the sequence $\{(x^k,~y^k)\}^\infty_{k=0}$ generated by~\eqref{eq:gnms} converges to $(x^*,y^* = Q^{-1}|x^*|)$.
\end{cor}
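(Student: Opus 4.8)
The plan is to show that the three hypotheses in \eqref{eq:con2} are jointly sufficient for the two requirements in \eqref{eq:con1}, after which the unique solvability and convergence follow verbatim from Theorem~\ref{theorem:5}. Throughout I would use that $\alpha,\beta,\gamma,\mu,\nu\ge 0$, since each is an operator norm. First I would record the structural consequences of \eqref{eq:con2}: the chain $|\gamma\alpha-\beta\nu|<\gamma<1$ forces $\gamma>0$ and $\gamma<1$, while $\beta(\mu+\nu)<(\gamma-1)(\alpha-1)$ together with $\gamma-1<0$ forces $\alpha<1$ and makes $(\gamma-1)(\alpha-1)>0$. Writing $D:=\beta(\mu+\nu)-(\gamma-1)(\alpha+1)=\beta(\mu+\nu)+(1-\gamma)(\alpha+1)>0$, the bound on $\tau$ in \eqref{eq:con2} is exactly $0<\tau<\tau^*$ with $\tau^*=2(1-\gamma)/D$; moreover a one-line manipulation shows that the second inequality of \eqref{eq:con2} is equivalent to $\tau^*>1$, so the admissible interval contains $(0,1]$ and extends past $1$.

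Next I would verify the second inequality of \eqref{eq:con1}, namely $\tau(\mu\beta+\beta\nu)<(\gamma-1)(|1-\tau|+\tau\alpha-1)$, by splitting on the sign of $1-\tau$. For $0<\tau\le 1$ one has $|1-\tau|+\tau\alpha-1=\tau(\alpha-1)$, so after dividing by $\tau>0$ the inequality becomes precisely $\beta(\mu+\nu)<(\gamma-1)(\alpha-1)$, the second hypothesis of \eqref{eq:con2}. For $\tau>1$ one has $|1-\tau|+\tau\alpha-1=\tau(\alpha+1)-2$, and rearranging $\tau\beta(\mu+\nu)<(\gamma-1)(\tau(\alpha+1)-2)$ yields $\tau D<2(1-\gamma)$, i.e. $\tau<\tau^*$, which is the range hypothesis. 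Hence the second inequality of \eqref{eq:con1} holds on the whole interval $0<\tau<\tau^*$.

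The remaining and most delicate step is the first inequality of \eqref{eq:con1}, $\big|\gamma|1-\tau|+\tau(\gamma\alpha-\beta\nu)\big|<1$. For $0<\tau\le 1$ the quantity $E:=\gamma(1-\tau)+\tau(\gamma\alpha-\beta\nu)$ is a convex combination of $\gamma$ and $\gamma\alpha-\beta\nu$, and since $|\gamma|=\gamma$ and $|\gamma\alpha-\beta\nu|<\gamma$ by the first hypothesis, the triangle inequality gives $|E|\le(1-\tau)\gamma+\tau|\gamma\alpha-\beta\nu|<\gamma<1$ at once. For $\tau>1$ I would write $E=\tau\big(\gamma(1+\alpha)-\beta\nu\big)-\gamma$; the first hypothesis gives $0<\gamma(1+\alpha)-\beta\nu<2\gamma$, so $E>-\gamma>-1$ automatically, and $E<1$ is equivalent to $\tau<(1+\gamma)/\big(\gamma(1+\alpha)-\beta\nu\big)$. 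The crux is then to show that this new bound is never smaller than $\tau^*$, so that $\tau<\tau^*$ already secures it; clearing denominators, this reduces to
\[
(1+\gamma)D-2(1-\gamma)\big(\gamma(1+\alpha)-\beta\nu\big)\ge 0,
\]
and I expect the algebra to collapse to the manifestly nonnegative expression $(1-\gamma)^2(1+\alpha)+\beta\big[(1+\gamma)\mu+(3-\gamma)\nu\big]$. Granting this, both inequalities of \eqref{eq:con1} hold for every $\tau\in(0,\tau^*)$, and Theorem~\ref{theorem:5} finishes the proof. The main obstacle is precisely this comparison of the two upper bounds on $\tau$ in the regime $\tau>1$; the $\tau\le 1$ analysis and the second inequality are routine once the sign bookkeeping of $|1-\tau|$ is carried out.
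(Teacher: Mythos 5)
Your proposal is correct and follows essentially the same route as the paper: it reduces the corollary to Theorem~\ref{theorem:5} by showing that \eqref{eq:con2} implies \eqref{eq:con1}, splitting on $0<\tau\le 1$ versus $\tau>1$ and, in the latter regime, comparing the upper bound $\tau^*=2(1-\gamma)/D$ with $(1+\gamma)/\bigl(\gamma(1+\alpha)-\beta\nu\bigr)$. Your algebra for that comparison does collapse to $(1-\gamma)^2(1+\alpha)+\beta\bigl[(1+\gamma)\mu+(3-\gamma)\nu\bigr]\ge 0$ as predicted, which in fact supplies the detail the paper's proof only asserts; the convex-combination treatment of Case~I is a minor cosmetic variant of the paper's two-sided bound on $\tau$.
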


\begin{proof}
In order to prove this corollary, it suffices to prove that \eqref{eq:con2} implies \eqref{eq:con1}. We will prove it in the following and the proof is divided into two cases.
\begin{itemize}
  \item [] \textbf{Case I}: We first consider $0 < \tau\leq 1$. It can be inferred from the first two inequalities of \eqref{eq:con2} that
	$$\dfrac{1-\gamma}{\gamma\alpha-\beta\nu-\gamma}<0\quad \text{and}\quad 1<\dfrac{-\gamma-1}{\gamma\alpha-\beta \nu-\gamma},$$
from which we have
\begin{equation}\label{neq:tau}
\dfrac{1-\gamma}{\gamma\alpha-\beta\nu-\gamma}<\tau<\dfrac{-\gamma-1}{\gamma\alpha-\beta \nu-\gamma}.
\end{equation}
Multiplying both sides of \eqref{neq:tau} by $\gamma\alpha-\beta\nu-\gamma$, we get
\begin{equation*}
		-\gamma-1< \tau(\gamma\alpha-\beta\nu)-\tau\gamma <1-\gamma.
\end{equation*}
For the right inequality, we have
$$\tau(\gamma\alpha-\beta \nu)+(1-\tau)\gamma<1.$$
For the left inequality, we have
$$ \tau(\gamma\alpha-\beta \nu)+(1-\tau)\gamma>-1.$$
In conclusion, we have
\begin{equation}\label{eq:ncond1}
\big\vert\tau(\gamma\alpha-\beta \nu)+\vert1-\tau\vert \gamma\big\vert<1.
\end{equation}
Furthermore,
\begin{align}\nonumber
		\tau(\mu\beta+\beta \nu) - (\gamma-1)(\vert 1-\tau\vert+\tau\alpha-1)&=\tau(\mu\beta+\beta \nu)- (\gamma-1)( 1-\tau+\tau\alpha-1)\\\nonumber
		&=\tau(\mu\beta+\beta \nu)) -\tau(\gamma-1)(\alpha-1)\\\label{eq:ncond2}
		&<0.
	\end{align}
Obviously, \eqref{eq:ncond1} and \eqref{eq:ncond2} imply \eqref{eq:con1}.

  \item [] \textbf{Case II}: We consider $1<\tau <\dfrac{2(1-\gamma)}{\beta(\mu+\nu)-(\gamma-1)(\alpha+1)}$, from which and the first two inequalities of \eqref{eq:con2} we have
      $$ \dfrac{\gamma-1}{\gamma\alpha-\beta \nu+\gamma}<\tau,$$
      which implies that
      \begin{equation}\label{neq:-1}
		\tau(\gamma\alpha-\beta \nu)+(\tau-1)\gamma>-1.
	\end{equation}
In addition, since $\beta(\mu+\nu)-(\gamma-1)(\alpha+1)>0$ and $\gamma\alpha-\beta \nu+\gamma >0$, it follows that $$\tau<\dfrac{2(1-\gamma)}{\beta(\mu+\nu)-(\gamma-1)(\alpha+1)}<\dfrac{\gamma+1}{\gamma\alpha-\beta \nu+\gamma},$$
which implies that
\begin{equation}\label{neq:1}
		\tau(\gamma\alpha-\beta \nu)+(\tau-1)\gamma<1.
	\end{equation}
Combining \eqref{neq:-1} and \eqref{neq:1}, we have
\begin{equation}\label{eq:ncond3}
\big\vert\tau(\gamma\alpha-\beta \nu)+\vert1-\tau\vert \gamma\big\vert<1.
\end{equation}
In addition, it follows from $\tau <\dfrac{2(1-\gamma)}{\beta(\mu+\nu)-(\gamma-1)(\alpha+1)}$ that
\begin{align}\nonumber
		\tau\beta(\mu+\nu)&<2(1-\gamma)+\tau(\gamma-1)(\alpha+1)\\\nonumber
		&=(\gamma-1)(\tau\alpha+\tau-2)\\\nonumber
		&=(\gamma-1)(\tau-1+\tau\alpha-1)\\\label{eq:ncond4}
		&=(\gamma-1)(\vert1-\tau\vert
		+\tau\alpha-1).
\end{align}
It easy to see that \eqref{eq:ncond3} and \eqref{eq:ncond4} imply \eqref{eq:con1}.
\end{itemize}

The proof is completed by summarizing the results of Case I and Case II.
\end{proof}

If $M = A + \Omega$, $N = \Omega$, where $\Omega$ is a semi-definite matrix, $Q = Q_1 = I$ and $\tau = 1$, then the GNMS method reduces to the MN method and we have the following Corollary.
\begin{cor}\label{cor:mn}
	Let $A+\Omega$ be nonsingular and
\begin{equation}\label{eq:mnconv}
\Vert(A+\Omega)^{-1}\Omega\Vert+\Vert(A+\Omega)^{-1}B\Vert < 1.
 \end{equation}
 Then, the GAVEs \eqref{eq:gave} has a unique solution $x^*$ and the MN iteration method \eqref{eq:mn} converges to the unique solution.
\end{cor}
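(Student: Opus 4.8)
The plan is to obtain this corollary as an immediate specialization of Theorem~\ref{theorem:5}, using exactly the parameter choices announced in the lead-in: $M = A + \Omega$, $N = \Omega$, $Q = Q_1 = I$ (hence $Q_2 = 0$) and $\tau = 1$. First I would confirm that the structural hypotheses of Theorem~\ref{theorem:5} hold, namely that $M = A+\Omega$ is nonsingular (given) and $Q_1 = I$ is nonsingular (trivial), and that $A = M - N$ and $Q = Q_1 - Q_2$ are consistent with these choices. I would also record, for completeness, that with $\tau=1$, $Q_1=I$, $Q_2=0$ the first line of~\eqref{eq:gnms} collapses to $y^{k+1}=|x^k|$, which when substituted into the second line reproduces exactly the MN iteration~\eqref{eq:mn}; this justifies the phrase ``reduces to the MN method.''

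Next I would evaluate the five quantities in~\eqref{eq:nota} under these substitutions. Since $Q_1=I$ and $Q_2=0$, one gets $\alpha=\Vert Q_1^{-1}Q_2\Vert=0$, $\beta=\Vert Q_1^{-1}\Vert=1$ and $\nu=\Vert M^{-1}BQ_2\Vert=0$, while $\gamma=\Vert(A+\Omega)^{-1}\Omega\Vert$ and $\mu=\Vert(A+\Omega)^{-1}B\Vert$ are the only surviving terms; the choice $\tau=1$ additionally gives $\vert 1-\tau\vert=0$. Substituting these into the two conditions of~\eqref{eq:con1}, the first condition becomes $\vert\gamma\cdot 0+(\gamma\cdot 0-\nu)\vert=0<1$, which holds automatically, and the second becomes $\mu<(\gamma-1)(-1)=1-\gamma$, i.e. $\gamma+\mu<1$. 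This is precisely hypothesis~\eqref{eq:mnconv}. Hence~\eqref{eq:mnconv} implies both inequalities in~\eqref{eq:con1}, so Theorem~\ref{theorem:5} yields the unique solvability of~\eqref{eq:gave} and convergence of the generated sequence to $x^*$; note that here $y^*=Q^{-1}|x^*|=|x^*|$ because $Q=I$.

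There is essentially no obstacle, as the argument is a direct substitution into an already-proved theorem. The only point deserving a little care is the clean collapse of the somewhat opaque nested-modulus first condition in~\eqref{eq:con1}, which reduces to the trivial bound $0<1$ precisely because $\tau=1$ forces $\vert 1-\tau\vert=0$ and $Q_2=0$ forces $\alpha=\nu=0$. I would also remark that the semi-definiteness of $\Omega$ mentioned in the preamble is not needed for this convergence conclusion: Theorem~\ref{theorem:5} uses only the nonsingularity of $A+\Omega$ and the norm bound~\eqref{eq:mnconv}.
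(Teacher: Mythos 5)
Your proof is correct, and it is essentially the same specialization argument the paper uses: plug $M=A+\Omega$, $N=\Omega$, $Q=Q_1=I$, $Q_2=0$, $\tau=1$ into the general convergence result. The one difference is the entry point: you substitute directly into condition \eqref{eq:con1} of Theorem~\ref{theorem:5} (correctly obtaining $\alpha=\nu=0$, $\beta=1$, $\vert 1-\tau\vert=0$, so the first inequality collapses to $0<1$ and the second to $\mu<1-\gamma$, i.e.\ \eqref{eq:mnconv}), whereas the paper routes through Corollary~\ref{cor:conv}, asserting that \eqref{eq:con2} reduces to \eqref{eq:mnconv} in this setting. Your route is marginally cleaner: the first inequality of \eqref{eq:con2} reads $\vert\gamma\alpha-\beta\nu\vert<\gamma$, which with $\alpha=\nu=0$ forces the strict bound $0<\gamma$ and hence technically excludes the case $\Omega=0$ (where $\gamma=0$); verifying \eqref{eq:con1} directly has no such artifact. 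Your closing remarks — that the first line of \eqref{eq:gnms} collapses to $y^{k+1}=|x^k|$ so that \eqref{eq:gnms} really is \eqref{eq:mn}, and that the positive semi-definiteness of $\Omega$ mentioned in the lead-in is never used — are both accurate and worth recording.
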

\begin{proof}
In this case, the condition \eqref{eq:con2} reduces to $\Vert(A+\Omega)^{-1}\Omega\Vert+\Vert(A+\Omega)^{-1}B\Vert < 1$. Then the results follow from Corollary~\ref{cor:conv}.
\end{proof}

\begin{rem}
In \cite[Theorem 3.1]{wacc2019}, the authors show that the MN iteration method~\eqref{eq:mn} converges linearly to a solution of the GAVEs~\eqref{eq:gave} if
\begin{equation}\label{eq:wc}
\|(A+\Omega)^{-1}\|(\|\Omega\| + \|B\|)<1.
\end{equation}
However, under the condition \eqref{eq:wc}, the unique solvability of the GAVEs~\eqref{eq:gave} does not explored in \cite{wacc2019}. In addition, \eqref{eq:wc} implies \eqref{eq:mnconv} but the converse is generally not true. Hence, the condition \eqref{eq:mnconv} is weaker than \eqref{eq:wc} and we can conclude that the GAVEs~\eqref{eq:gave} is unique solvable under \eqref{eq:wc}.

\end{rem}

If $\Omega=0$, then the MN method develops into the Picard method \cite{rohf2014} and we get the following Corollary $\ref{cor:pi}$.
\begin{cor}\label{cor:pi}
	Let $A$ be nonsingular and $\Vert A^{-1}B\Vert< 1$. Then, the Picard iterative method converges to the unique solution $x^*$ of the GAVEs \eqref{eq:gave}.
\end{cor}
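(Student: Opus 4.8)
The plan is to recognize Corollary~\ref{cor:pi} as the degenerate case of Corollary~\ref{cor:mn} obtained by setting $\Omega = 0$, so the entire burden is to verify that the hypotheses of Corollary~\ref{cor:mn} collapse exactly to ``$A$ nonsingular and $\Vert A^{-1}B\Vert < 1$.'' First I would observe that when $\Omega = 0$, the nonsingularity requirement ``$A + \Omega$ nonsingular'' becomes simply ``$A$ nonsingular,'' and the matrix $(A+\Omega)^{-1}\Omega$ reduces to $A^{-1}\cdot 0 = 0$, whose norm vanishes. Consequently the convergence condition \eqref{eq:mnconv}, namely $\Vert(A+\Omega)^{-1}\Omega\Vert + \Vert(A+\Omega)^{-1}B\Vert < 1$, simplifies to $0 + \Vert A^{-1}B\Vert < 1$, i.e.\ exactly $\Vert A^{-1}B\Vert < 1$.

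Next I would confirm that the iteration itself degenerates correctly: substituting $\Omega = 0$ into the MN iteration \eqref{eq:mn} gives $x^{k+1} = A^{-1}(B|x^k| + c)$, which is precisely the Picard iteration of \cite{rohf2014}. With the iteration and the hypotheses both matching, I would invoke Corollary~\ref{cor:mn} directly: since $A$ is nonsingular and $\Vert A^{-1}B\Vert < 1$ is exactly condition \eqref{eq:mnconv} with $\Omega = 0$, Corollary~\ref{cor:mn} guarantees that the GAVEs \eqref{eq:gave} has a unique solution $x^*$ and that the iterates converge to it.

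I do not anticipate any genuine obstacle here, since this is a one-line specialization rather than a fresh argument; the only thing to be careful about is making the reduction transparent, i.e.\ explicitly noting that $(A+\Omega)^{-1}\Omega \to 0$ so its norm contributes nothing, rather than leaving the reader to check it. The proof should therefore read essentially as: ``Take $\Omega = 0$ in Corollary~\ref{cor:mn}. Then \eqref{eq:mnconv} becomes $\Vert A^{-1}B\Vert < 1$ and the MN iteration \eqref{eq:mn} becomes the Picard iteration, so the conclusion follows from Corollary~\ref{cor:mn}.''
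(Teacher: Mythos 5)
Your proposal is correct and matches the paper exactly: the paper states Corollary~\ref{cor:pi} without a separate proof, deriving it as the $\Omega = 0$ specialization of Corollary~\ref{cor:mn}, which is precisely your argument. Your explicit verification that \eqref{eq:mnconv} collapses to $\Vert A^{-1}B\Vert < 1$ and that \eqref{eq:mn} becomes the Picard iteration is just a more transparent writing-out of the same one-line reduction.
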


\begin{rem}
In \cite[Theorem~2]{rohf2014}, the authors showed that the Picard iteration method converges to the unique solution of the GAVEs~\eqref{eq:gave} if $\rho(|A^{-1}B|)<1$. Clearly, when $A^{-1}B\geq0$, $\rho(|A^{-1}B|)\leq\Vert A^{-1}B\Vert< 1$, but the converse is generally not true. However, when $A^{-1}B \ngeq 0$, the following example shows that $\rho(|A^{-1}B|)< 1$ and $\Vert A^{-1}B\Vert< 1$ are irrelevant.


\begin{exam}
		\begin{enumerate}
			\item[(1)] Consider $A=
			\begin{bmatrix}
				1 & 0.5\\
				3 & 0.25
			\end{bmatrix}$
			and $B=
			\begin{bmatrix}
				1 & 0\\
				2.1 & 1
			\end{bmatrix}$, it follows that $A^{-1}B=
			\begin{bmatrix}
				0.64 & 0.4\\
				0.72 & -0.8
			\end{bmatrix}\ngeq0$, $\Vert A^{-1}B\Vert=1.0910>1$, and $\rho(\vert A^{-1}B\vert)=0.9780<1$.

			\item[(2)] When $A=
			\begin{bmatrix}
				3 & 0\\
				0 & 3
			\end{bmatrix}$
			 and $B=
			\begin{bmatrix}
				-2 & 1\\
				1 & 2
			\end{bmatrix}$, we have $A^{-1}B=\dfrac{1}{3}
			\begin{bmatrix}
				-2 & 1\\
				1 & 2
			\end{bmatrix}\ngeq0$, $\Vert A^{-1}B\Vert=0.7454<1$, and $\rho(\vert A^{-1}B\vert)=1$.
		\end{enumerate}
\end{exam}
\end{rem}

If $M = \bar{M}+\Omega$, $N = \bar{N}+\Omega$, where $\Omega$ is a given matrix, $Q = Q_1 = I$ and $\tau = 1$, then the GNMS method changes into the NMS method and Corollary \ref{cor:nms} can be obtained.
\begin{cor}\label{cor:nms}
	If $\bar{M}+\Omega$ is nonsingular and
\begin{equation}\label{eq:nmsconv}
\Vert{(\bar{M}+\Omega)}^{-1}(\bar{N}+\Omega)\Vert+\Vert{(\bar{M}+\Omega)}^{-1}B\Vert <1
\end{equation}
then NMS method converges linearly to the unique solution $x^*$ of the GAVEs~\eqref{eq:gave}.
\end{cor}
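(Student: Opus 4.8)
The plan is to obtain Corollary~\ref{cor:nms} as the special instance of the general convergence result produced by specializing the GNMS iteration \eqref{eq:gnms} to the NMS iteration \eqref{eq:nms}, exactly in the spirit of the proof of Corollary~\ref{cor:mn}. Concretely, I would invoke the identification already recorded in the text preceding the statement, namely $Q = Q_1 = I$, $Q_2 = 0$, $M = \bar{M} + \Omega$, $N = \bar{N} + \Omega$ and $\tau = 1$, and then simply check that the hypothesis \eqref{eq:nmsconv} is precisely the convergence condition of Theorem~\ref{theorem:5} in this case.

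The first step is to evaluate the five scalars in \eqref{eq:nota} under these choices. Since $Q_1 = I$ and $Q_2 = 0$, we get at once $\alpha = \Vert Q_1^{-1} Q_2 \Vert = 0$, $\beta = \Vert Q_1^{-1} \Vert = 1$ and $\nu = \Vert M^{-1} B Q_2 \Vert = 0$, while $\gamma = \Vert (\bar{M}+\Omega)^{-1}(\bar{N}+\Omega) \Vert$ and $\mu = \Vert (\bar{M}+\Omega)^{-1} B \Vert$. Thus the two norms in \eqref{eq:nmsconv} are exactly $\gamma$ and $\mu$, and the hypothesis reads $\gamma + \mu < 1$.

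The second step is to substitute these values into the convergence condition. Working directly with \eqref{eq:con1} of Theorem~\ref{theorem:5} at $\tau = 1$ is cleanest: using $\vert 1-\tau\vert = 0$ together with $\alpha = \nu = 0$, the first inequality collapses to $0 < 1$, which is automatic, and the second becomes $\mu < (\gamma - 1)(-1) = 1 - \gamma$, that is $\gamma + \mu < 1$. Hence \eqref{eq:nmsconv} is nothing but condition \eqref{eq:con1} here, and Theorem~\ref{theorem:5} delivers both the unique solvability of \eqref{eq:gave} and the convergence of the iterates to $(x^*, Q^{-1}|x^*|) = (x^*, |x^*|)$. Alternatively, to match the pattern of Corollary~\ref{cor:mn}, one can verify that \eqref{eq:con2} of Corollary~\ref{cor:conv} reduces to the same inequality $\gamma + \mu < 1$, with $\tau = 1$ lying in the admissible interval $0 < \tau < 2(1-\gamma)/(\mu + 1 - \gamma)$, and invoke that corollary instead.

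Finally, the linear rate asserted in the statement is not a separate claim but a byproduct of the mechanism inside Theorem~\ref{theorem:5}: the componentwise bound \eqref{neq:ssnorm} with iteration matrix $W$ satisfying $\rho(W) < 1$ forces the consecutive differences $\Vert x^{k+1} - x^k\Vert$ and $\Vert y^{k+1} - y^k\Vert$ to decay geometrically, as made explicit by the $W^k$ factor in \eqref{neq:cauchy}. I do not anticipate any genuine analytic difficulty; the only point requiring a little care is the Corollary~\ref{cor:conv} route, whose chain \eqref{eq:con2} nominally demands $\gamma > 0$ and a nonempty $\tau$-interval, so I would prefer the direct reduction of \eqref{eq:con1} at $\tau = 1$, which sidesteps this bookkeeping altogether.
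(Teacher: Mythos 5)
Your proposal is correct and follows essentially the same route as the paper: the paper's proof is a one-line specialization of Corollary~\ref{cor:conv}, observing that under $Q=Q_1=I$, $Q_2=0$, $M=\bar{M}+\Omega$, $N=\bar{N}+\Omega$ and $\tau=1$ the condition \eqref{eq:con2} reduces to \eqref{eq:nmsconv}. Your preferred variant --- substituting directly into \eqref{eq:con1} of Theorem~\ref{theorem:5} at $\tau=1$ --- is a harmless and in fact slightly sharper shortcut, since, as you note, the chain through \eqref{eq:con2} nominally requires $\gamma>0$ and would therefore formally exclude the degenerate case $\bar{N}+\Omega=0$, a point the paper's one-line proof glosses over.
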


\begin{proof}
 Under this circumstances, we can obtain \eqref{eq:nmsconv} from \eqref{eq:con2}.
 Hence, under \eqref{eq:nmsconv} we can conclude that the system \eqref{eq:gave} is unique solvable and the NMS method converges to the unique solution $x^*$.
\end{proof}
\begin{rem}\label{rem:nms}
In \cite[Theorem 4.1]{zhwl2021},  Zhou et al. show that the NMS iteration method (Algorithm \ref{alg:nms}) converges to a solution of the GAVEs~\eqref{eq:gave} if
\begin{equation}\label{eq:zw}
\Vert{(\bar{M}+\Omega)}^{-1}\Vert(\Vert\bar{N}+\Omega\Vert+\Vert B\Vert) <1
\end{equation}
As it can be seen \eqref{eq:zw} implies \eqref{eq:nmsconv} but the converse is generally not true. Thus the condition \eqref{eq:nmsconv} is weaker than \eqref{eq:zw} and we can conclude that the GAVEs \eqref{eq:gave} is uniquely solvable under the condition \eqref{eq:zw}.

\end{rem}

If $M = \theta\hat{M}+\hat{\Omega}$, $N =\hat{\Omega}+(\theta-1)\hat{M}+\hat{N}$, where $\hat{\Omega}$ is given matrix, $Q = Q_1 = I$ and $\tau = 1$, then the GNMS method reduces to the RNMS method and the following corollary can be obtained.
\begin{cor}\label{cor:rnms}
 If $\theta\hat{M}+\hat{\Omega}$ is nonsingular and
\begin{equation}\label{eq:rnmsconv}
\Vert{(\theta\hat{M}+\hat{\Omega})}^{-1}(\hat{\Omega}+(\theta-1)\hat{M}+\hat{N})\Vert+\Vert{(\theta\hat{M}+\hat{\Omega})}^{-1}B\Vert <1,
\end{equation}
then RNMS method (Algorithm \ref{alg:rnms}) converges linearly to the unique solution $x^*$ of the GAVEs~\eqref{eq:gave}.
\end{cor}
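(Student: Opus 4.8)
The plan is to follow the template already used for Corollaries~\ref{cor:mn} and~\ref{cor:nms}: specialize the five quantities in \eqref{eq:nota} to the present parameter choice, and then check that hypothesis \eqref{eq:rnmsconv} is exactly what the general convergence condition collapses to. First I would record the specialization. Since $Q = Q_1 = I$ forces $Q_2 = Q_1 - Q = 0$, \eqref{eq:nota} gives
\begin{equation*}
\alpha = \Vert Q_1^{-1}Q_2\Vert = 0, \qquad \beta = \Vert Q_1^{-1}\Vert = 1, \qquad \nu = \Vert M^{-1}BQ_2\Vert = 0,
\end{equation*}
while $M = \theta\hat{M}+\hat{\Omega}$ and $N = \hat{\Omega}+(\theta-1)\hat{M}+\hat{N}$ turn the other two into
\begin{equation*}
\gamma = \Vert(\theta\hat{M}+\hat{\Omega})^{-1}(\hat{\Omega}+(\theta-1)\hat{M}+\hat{N})\Vert, \qquad \mu = \Vert(\theta\hat{M}+\hat{\Omega})^{-1}B\Vert.
\end{equation*}
With these values, hypothesis \eqref{eq:rnmsconv} reads simply $\gamma + \mu < 1$.

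Next I would verify that this reduced hypothesis implies convergence of the GNMS scheme. The cleanest route is to substitute $\tau = 1$, $\alpha = 0$, $\beta = 1$, $\nu = 0$ directly into \eqref{eq:con1} of Theorem~\ref{theorem:5}. Because $\vert 1-\tau\vert = 0$ and $\gamma\alpha-\beta\nu = 0$, the expression inside the first absolute value vanishes, so the first inequality becomes $0 < 1$ and holds automatically; the second inequality becomes $\mu < (\gamma-1)(0 + 0 - 1) = 1 - \gamma$, i.e. exactly $\gamma + \mu < 1$. Hence the pair \eqref{eq:con1} reduces to precisely \eqref{eq:rnmsconv} in this case. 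One may instead argue through \eqref{eq:con2} as in Corollary~\ref{cor:nms}: both $\beta(\mu+\nu) < (\gamma-1)(\alpha-1)$ and the upper bound $\tau < 2(1-\gamma)/[\beta(\mu+\nu)-(\gamma-1)(\alpha+1)]$ reduce, at $\tau = 1$, to $\gamma + \mu < 1$ as well.

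Finally I would confirm that \eqref{eq:gnms} genuinely specializes to the RNMS iteration \eqref{eq:rnms}, so that the conclusion transfers. With $Q_1 = I$, $Q_2 = 0$, $\tau = 1$ the first line of \eqref{eq:gnms} gives $y^{k+1} = |x^k|$, and feeding this into the second line yields $x^{k+1} = M^{-1}(Nx^k + B|x^k| + c)$, which is \eqref{eq:rnms} once $M$ and $N$ are reinserted; one also checks $M - N = \hat{M} - \hat{N} = A$, so the splitting is consistent. Invoking Theorem~\ref{theorem:5} then delivers both the unique solvability of \eqref{eq:gave} and the (linear) convergence of the RNMS iterates to $x^*$. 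I expect the only delicate point to be bookkeeping: confirming that $Q_2 = 0$ really forces $\alpha = \nu = 0$, and noting that the strict clause $0 < \gamma$ implicit in \eqref{eq:con2} is sidestepped by arguing through \eqref{eq:con1} instead; the remaining algebra is routine.
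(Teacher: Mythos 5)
Your proposal is correct and follows essentially the same route as the paper, which proves this corollary (like Corollaries~\ref{cor:mn} and~\ref{cor:nms}) simply by specializing $Q=Q_1=I$, $Q_2=0$, $\tau=1$, $M=\theta\hat{M}+\hat{\Omega}$, $N=\hat{\Omega}+(\theta-1)\hat{M}+\hat{N}$ so that the general convergence condition collapses to $\gamma+\mu<1$, i.e.\ \eqref{eq:rnmsconv}. Your choice to verify \eqref{eq:con1} directly rather than pass through \eqref{eq:con2} is a minor but genuine refinement, since it avoids the strict requirement $0<\gamma$ hidden in \eqref{eq:con2} (which would fail in the degenerate case $N=0$); otherwise the two arguments coincide.
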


\begin{rem}
In \cite[Theorem 3.1]{zhsh2023}, Zhao and Shao present that the RNMS iteration method (Algorithm \ref{alg:rnms}) converges to a solution of the GAVEs~\eqref{eq:gave} if
\begin{equation}\label{eq:ws}
\Vert{(\theta\hat{M}+\hat{\Omega})}^{-1}\Vert(\Vert\hat{\Omega}+(\theta-1)\hat{M}+\hat{N}\Vert+\Vert B\Vert) <1
\end{equation}
It is easy to see that \eqref{eq:ws} implies \eqref{eq:rnmsconv}, but \eqref{eq:rnmsconv} generally not implies \eqref{eq:ws}. Thus \eqref{eq:rnmsconv} is weaker than \eqref{eq:ws}. By the Corollary \ref{cor:conv}, we can conclude the unique solvability of the GAVEs \eqref{eq:gave} when \eqref{eq:ws} holds.

\end{rem}

\section{ Numerical example}\label{sec:Numericalexample}
In this section, we utilize an example to demonstrate the effectiveness of the proposed method for solving the GAVEs \eqref{eq:gave}. All experiments were run on a personal computer with $3.20$ GHZ central processing unit \big(Intel (R), Corel(TM), i$5$-$11320$H\big), $16$GB memory and windows $11$ operating system, and the MATLAB version R$2021$a is used. Eight algorithms will be tested.
\begin{enumerate}

  \item GNMS: the Algorithm~\ref{alg:gnms} with $Q_1 = 10I$, $Q_2 = 0.5I$, $M = D-\frac{3}{4}L$ and $N = \frac{1}{4}L + U$, where $D$, $-L$ and $-U$ are the diagonal part,
the strictly lower-triangular and the strictly upper-triangular parts of $A$, respectively.

  \item MN: the Algorithm~\ref{alg:mn}.

  \item Picard: the Picard iteration \cite{rohf2014}
  $$
  x^{k+1} = A^{-1} (B|x^k| + c).
  $$

  \item FPI: the fixed point iteration
  \begin{equation*}
		\begin{cases}
		x^{k+1}=A^{-1}\left( By^k + c\right),\\
			y^{k+1}=(1-\tau)y^k + \tau |x^{k+1}|,
		\end{cases}
	\end{equation*}
which arises from \cite{ke2020}.

  \item NMS: the Algorithm~\ref{alg:nms} with $\bar{M} = M$.

  \item NGS: the Algorithm~\ref{alg:nms} with $\bar{M} = D - L$ and $\bar{N} = U$.

  \item RMS: the relaxed-based matrix splitting iteration \cite{soso2023}
      \begin{equation*}
		\begin{cases}
		y^{k+1}=S^{-1}\left(T x^k+ By^k + c\right),\\
			y^{k+1}=(1-\tau)y^k + \tau |x^{k+1}|
		\end{cases}
	\end{equation*}
with $S=M$ and $T = N$.

  \item SSMN: the Algorithm~\ref{alg:ssmn}.
\end{enumerate}

\begin{exam}\label{exam1}
Consider the GAVEs \eqref{eq:gave} with $A=\tilde{A}+\dfrac{1}{5}I$ and
$$B=
\left(
  \begin{array}{cccccccccccc}
    S_2 & -I & -I &-I &-I & 0 & 0 &0 & 0 & 0 & 0 & 0 \\
    -I & S_2 & -I &-I &-I & -I & 0 &0 & 0 & 0 & 0 & 0\\
    -I & -I & S_2 &-I &-I & -I & -I &0 & 0 & 0 & 0 & 0\\
    -I & -I & -I &S_2 &-I & -I & -I &-I & 0 & 0 & 0 & 0\\
    -I & -I & -I &-I &S_2 & -I & -I &-I & -I & 0 & 0 & 0\\
    0 & -I & -I &-I &-I & S_2 & -I &-I & -I & -I & 0 & 0\\
    \ddots&\ddots&\ddots&\ddots&\ddots&\ddots&\ddots
    &\ddots&\ddots&\ddots&\ddots&\ddots\\
    0 & 0&0&-I & -I &-I &-I & S_2 & -I &-I & -I & -I \\
    0 & 0&0&0&-I & -I &-I &-I & S_2 & -I &-I & -I  \\
    0 & 0&0&0&0&-I & -I &-I &-I & S_2 & -I &-I   \\
    0 & 0&0&0&0&0&-I & -I &-I &-I & S_2 & -I   \\
    0 & 0&0&0&0&0&0&-I & -I &-I &-I & S_2
  \end{array}
\right)
\in\mathbb{R}^{n\times n},$$
where	
$$\tilde{A}=\small
\left(
  \begin{array}{cccccccccccc}
    S_1 & -1.5I & -0.5I &-1.5I &-0.5I & 0 & 0 &0 & 0 & 0 & 0 & 0 \\
    -1.5I & S_1 & -1.5I &-0.5I &-1.5I & -0.5I & 0 &0 & 0 & 0 & 0 & 0\\
    -0.5I & -1.5I & S_1 &-1.5I &-0.5I & -1.5I & -0.5I &0 & 0 & 0 & 0 & 0\\
    -1.5I & -0.5I & -1.5I &S_1 &-1.5I & -0.5I & -1.5I &-0.5I & 0 & 0 & 0 & 0\\
    -0.5I & -1.5I & -0.5I &-1.5I &S_1 & -1.5I & -0.5I &-1.5I & -0.5I & 0 & 0 & 0\\
    0 & -0.5I & -1.5I &-0.5I &-1.5I & S_1 & -1.5I &-0.5I & -1.5I & -0.5I & 0 & 0\\
    \ddots&\ddots&\ddots&\ddots&\ddots&\ddots&\ddots
    &\ddots&\ddots&\ddots&\ddots&\ddots\\
    0 & 0&0&-0.5I & -1.5I &-0.5I &-1.5I & S_1 & -1.5I &-0.5I & -1.5I & -0.5I \\
    0 & 0&0&0&-0.5I & -1.5I &-0.5I &-1.5I & S_1 & -1.5I &-0.5I & -1.5I  \\
    0 & 0&0&0&0&-0.5I & -1.5I &-0.5I &-1.5I & S_1 & -1.5I &-0.5I   \\
    0 & 0&0&0&0&0&-0.5I & -1.5I &-0.5I &-1.5I & S_1 & -1.5I   \\
    0 & 0&0&0&0&0&0&-0.5I & -1.5I &-0.5I &-1.5I & S_1
  \end{array}
\right),
$$

$$
S_1=
\left(
  \begin{array}{cccccccccc}
    36 & -1.5 & -0.5 &-1.5 & 0 &0 & 0 & 0 & 0 & 0 \\
    -1.5 & 36 & -1.5 &-0.5 &-1.5 &  0 &0 & 0 & 0 & 0 \\
    -0.5 & -1.5 & 36 &-1.5 &-0.5 & -1.5 & 0 & 0 & 0 & 0\\
    -1.5 & -0.5 & -1.5 &36 &-1.5 & -0.5 & -1.5 & 0 & 0 & 0\\
    0 & -1.5 & -0.5 &-1.5 &36 & -1.5 & -0.5 &-1.5 & 0 & 0\\
    0 & 0 & -1.5 &-0.5 &-1.5 & 36 & -1.5 &-0.5 & -1.5  & 0\\
    \ddots&\ddots&\ddots&\ddots&\ddots&\ddots&\ddots
    &\ddots&\ddots&\ddots\\
     0&0&0&0 & -1.5 &-0.5 &-1.5 & 36 & -1.5 &-0.5   \\
     0&0&0&0&0& -1.5 &-0.5 &-1.5 &36 & -1.5   \\
     0&0&0&0&0&0& -1.5 &-0.5 &-1.5 & 36
  \end{array}
\right)
\in\mathbb{R}^{m\times m},$$

$$S_2=
\left(
  \begin{array}{cccccccccc}
    3 & -1 & -1 &-1 & 0 &0 & 0 & 0 & 0 & 0 \\
    -1 & 3 & -1 &-1 &-1 &  0 &0 & 0 & 0 & 0 \\
    -1 & -1 & 3 &-1 &-1 & -1 & 0 & 0 & 0 & 0\\
    -1 & -1 & -1 &3 &-1 & -1 & -1 & 0 & 0 & 0\\
    0 & -1 & -1 &-1 &3 & -1 & -1 &-1 & 0 & 0\\
    0 & 0 & -1 &-1 &-1 & 3 & -1 &-1 & -1  & 0\\
    \ddots&\ddots&\ddots&\ddots&\ddots&\ddots&\ddots
    &\ddots&\ddots&\ddots\\
     0&0&0&0 & -1 &-1 &-1 & 3 & -1 &-1   \\
     0&0&0&0&0& -1 &-1 &-1 &3 & -1   \\
     0&0&0&0&0&0& -1 &-1 &-1 & 3
  \end{array}
\right)
\in\mathbb{R}^{m\times m},$$

and $c=Ax^*-B|x^*|$ with $x^*=\left(\dfrac{1}{2},1,\dfrac{1}{2},1,\dots,\dfrac{1}{2},
1\right)^\top$.

In this example, we choose $x_0=(-1,0,-1,0,\dots,~-1,0)^\top$, $y_0= c$. For every   $n$, we run each method ten times and the average IT (the number of iteration), the average CPU (the elapsed CPU time in seconds) and the average RES are reported, where
$$
{\rm RES} = \frac{\|Ax^k - B|x^k| -c\|}{\|c\|}.
$$
Once ${\rm RES} \le 10^{-8}$, the experiment is terminated. Numerical results are shown in Table~\ref{table1}\footnote{In the table, $\tau_{opt}$ is the numerical optimal iteration parameter, which is selected from $[0:0.01:2]$ and is the first one to reach the minimal number of iteration of the method.}, from which we can see that the proposed method is the best one within the tested methods, both in terms of IT and CPU.

\setlength{\tabcolsep}{7.0pt}
\begin{table}[!h]\footnotesize \label{table1}
\centering
\caption{Numerical results for Example~\ref{exam1}.}\label{table1}
\begin{tabular}{ccccccc}\hline
			Method  & $m$ & $60$  & $80$ & $90$ & $100$ & $110$  \\ \hline
			GNMS   & $\tau_{opt}$  & $  1.00$ &  $ 1.00$ & $ 1.00$  & $ 1.00$  &$1.00$   \\
			&  IT  &  $8$ & $8$   & $8$    & $8$  &$8$   \\
			&   CPU & $0.0018$ &  $0.0033$ &  $0.0044$ & $0.0071$ &$0.0087$  \\
			&    RES &  4.1370e-09 & 3.1608e-09 & 2.8363e-09 & 2.5773e-09 & 2.3658e-09   \\
			MN    & $\Omega = 2*diag(A)$  &  &   &   & &     \\
			&  IT  &  $47$ & $47$   & $47$    & $47$  &$47$   \\
			&   CPU & $0.3717$ &  $0.7608$ &  $1.0522$ & $1.4283$ &$1.7924$  \\
			&    RES &  7.5124e-09 & 7.0945e-09 & 6.9526e-09 & 6.8380e-09 &  6.7435e-09 \\
            & $\Omega = \frac{1}{2}*diag(A)$  &  &   &   & &     \\
			&  IT  &  $16$ & $16$   & $16$    & $16$  &$16$   \\
			&   CPU & $0.1268$ &  $0.2544$ &  $0.3548$ & $0.4895$ &$0.6081$  \\
			&    RES &  7.2195e-09 & 5.9416e-09 & 5.5203e-09 & 5.1856e-09 &  4.9137e-09 \\
			Picard   &   &  &   &   &   &   \\
			&  IT  &  $26$ & $26$   & $26$    & $26$ & $26$   \\
			&   CPU & $0.2003$ &  $0.4274$ &  $0.5773$ & $0.7880$ &$0.9853$  \\
			&    RES &  6.9693e-09 & 8.2848e-09 & 8.7217e-09 & 9.0704e-09 & 9.3553e-09   \\
           FPI   &  $\tau_{opt}$ & $0.8$ & $0.8$  &  $0.79$ &  $0.79$ & $0.79$  \\
			&  IT  &  $17$ & $17$   & $17$    & $17$ & $17$   \\
			&   CPU & $0.1329$ &  $0.2729$ &  $0.3863$ & $0.5147$ &$0.6506$  \\
			&    RES &  9.2742e-09 & 8.4833e-09 & 9.7848e-09 & 9.3634e-09 & 8.9942e-09   \\
			NMS    & $\Omega = 2*diag(A)$  &  &   &   &   &   \\
			&  IT  &  $52$ & $52$   & $52$    &  $52$ & $52$   \\
			&   CPU & $0.0079$ &  $0.0164$ &  $0.0252$ & $0.0416$ & $0.0493$  \\
			&    RES &  7.8099e-09 & 7.7233e-09 & 7.6941e-09 & 7.6705e-09 & 7.6512e-09   \\
            & $\Omega = \dfrac{1}{2}*diag(A)$  &  &   &   &   &   \\
			&  IT  &  $19$ & $19$   & $19$    &  $19$ & $19$   \\
			&   CPU & $0.0032$ &  $0.0066$ &  $0.0090$ & $0.0162$ & $0.0193$  \\
			&    RES &  5.6173e-09 & 5.1661e-09 & 5.0093e-09 & 4.8812e-09 & 4.7744e-09   \\
             NGS    & $\Omega = 2*diag(A)$  &  &   &   &   &   \\
			&  IT  &  $51$ & $51$   & $51$    &  $51$ &$51$   \\
			&  CPU & $0.0071$ &  $0.0142$ &  $0.0202$ & $0.0324$ &$0.0423$  \\
			&  RES &  7.6531e-09 & 7.5154e-09 & 7.4677e-09 & 7.4300e-09 & 7.3991e-09   \\
             & $\Omega = \dfrac{1}{2}*diag(A)$  &  &   &   &   &   \\
			&  IT  &  $18$ & $18$   & $18$  &  $18$ &$18$   \\
			&  CPU & $0.0029$ &  $0.0057$ &  $0.0074$ & $0.0137$ &$0.0160$  \\
			&  RES &  8.0587e-09 & 7.0044e-09 & 6.6319e-09 & 6.3241e-09 & 6.0648e-09   \\
			RMS  &$\tau_{opt}$ & $ 0.99$ & $0.99$ & $ 0.99$ & $ 0.99$ &$0.99$   \\
			&  IT  &  $12$ & $12$   & $12$    & $12$  &$12$   \\
			&  CPU & $0.0022$ &  $0.0046$ &  $0.0061$ & $0.0093$ &$0.0115$  \\
			&  RES &  3.4193e-09 & 2.7439e-09 & 2.5157e-09 & 2.3315e-09 & 2.1795e-09   \\
       SSMN & $\tilde{\Omega} = 2*diag(A)$ &  &  &  &  & \\
			&  IT  &  $18$ & $18$   & $18$  &  $18$ & $18$   \\
			&  CPU & $0.1443$ &  $0.2917$ &  $0.4124$ & $0.5657$ &$0.7034$  \\
			&  RES &  5.0798e-09 & 4.5585e-09 & 4.3772e-09 &  4.2288e-09 & 4.1049e-09   \\
            & $\tilde{\Omega} = \dfrac{1}{2}*diag(A)$ &  &  &  &  & \\
			&  IT  &  $39$ & $39$   & $39$  &  $39$ & $39$   \\
			&  CPU & $0.3126$ &  $0.6474$ &  $0.8856$ & $1.2022$ &$1.5239$   \\
			&  RES &  7.7547e-09  & 8.7984e-09  & 9.1439e-09  & 9.4195e-09  & 9.6445e-09  \\\hline
\end{tabular}
\end{table}

\end{exam}

\section{Conclusions}\label{sec:Conclusions}
A generalization of the Newton-based matrix splitting iteration method (GNMS) for solving GAVEs is proposed, which include some existing methods as special cases. Under mild conditions, the GNMS method converges
to the unique solution of the GAVEs and  a few weaker
convergence conditions for some existing methods are obtained. Numerical results illustrate that GNMS can be superior to some existing methods in our setting.

\bibliography{cjcmsample}

\end{document}